\documentclass[10pt,leqno]{article}
\usepackage{arxiv}
\usepackage{arxiv}
\usepackage{amsmath,amssymb,amsfonts,amsthm}
\usepackage{mathtools}
\usepackage{graphicx,xcolor}
\usepackage[utf8]{inputenc} 
\usepackage[T1]{fontenc}    
\usepackage{hyperref}       
\usepackage{url}            
\usepackage{booktabs}       
\usepackage{nicefrac}       
\usepackage{microtype}      
\usepackage{graphicx}
\usepackage{natbib}
\usepackage[version=4]{mhchem}
\usepackage{makecell}
\usepackage{comment}
\usepackage{xfrac} 
\usepackage{wasysym}
\newfont{\notapolice}{cmss8}
\newfont{\rempolice}{cmss9}
\newfont{\tablepolice}{cmtt10}
\newfont{\captionpolice}{cmsl9}
\newtheoremstyle{note}
{3pt}
{3pt}
{\rempolice}
{}
{\itshape}
{.}
{.5em}
{\thmname{#1}\thmnumber{ #2}\thmnote{ #3}}

\newtheorem{thm}{Theorem}[section]

\newtheorem{prop}[thm]{Proposition}

\theoremstyle{definition}
\newtheorem{defn}{Definition}[section]
\theoremstyle{note}
\newtheorem{remark}{Remark}

\newcommand{\jBV}[1]{{  j_{{#1}}^{\textrm{\tiny BV}}  }}
\newcommand{\iBV}[1]{{  i_{{#1}}^{\textrm{\tiny BV}}  }}

\newcommand{\bsinh}{{\textrm{\rm bsinh}}}
\newcommand{\bcosh}{{\textrm{\rm bcosh}}}
\newcommand{\absinh}{{\textrm{\rm arg\,bsinh}}}

\newcommand{\tend}{{  t_{\text{end}}  }}
\newcommand{\ovc}{{{\overline c}}}

\newcommand{\ovj}{{{\overline \jmath}}}
\newcommand{\ovphi}{{{\overline \phi}}}
\newcommand{\oveta}{{{\overline \eta}}}
\newcommand{\ovjBV}[1]{{ {\overline \jmath}_{{#1}}^{\textrm{\tiny BV}}  }}

\newcommand{\hatc}{{{\widehat c}}}

\title{An efficient 0D-space conservative and positivity preserving  battery model}
\date{} 					
\author{
    \href{https://orcid.org/0000-0003-2295-5118}{\includegraphics[height=1.6ex]{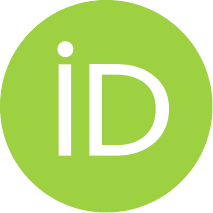}\hspace{1mm}S. Clain} \\
	Centre of Mathematics, Coimbra University, Largo D. Dinis, 3000-143 Coimbra, Portugal \\
	\texttt{clain@mat.uc.pt}\\
\And
	\href{https://orcid.org/0009-0008-6499-0097}{\includegraphics[scale=0.06]{orcid.pdf}\hspace{1mm}M.C. Lucas}\\
    Organic and Inorganic Chemistry Department, University of the Basque Country UPV/EHU,\\ P.O. Box 644, Bilbao, 48940, Spain\\
	\texttt{maria.mourato@ehu.eus}\\
\And
	\href{https://orcid.org/0000-0002-1900-8059}{\includegraphics[scale=0.06]{orcid.pdf}\hspace{1mm}M.T. Malheiro}\\
	Centre of Mathematics, Campus de Gualtar, 4710-057 Braga, Portugal \\
	\texttt{mtm@math.uminho.pt} \\
\And
	  \href{https://orcid.org/0000-}{\includegraphics[scale=0.06]{orcid.pdf}\hspace{1mm}C. M. Costa}\\
     Physics Center of Minho and Porto Universities (CF-UM-UP) and \\Laboratory of Physics for Materials and Emergent Technologies, LapMET,\\ University of Minho, Campus de Gualtar, 4710-057 Braga, Portugal \\
    \texttt{cmscosta@fisica.uminho.pt}\\
     }

\renewcommand{\undertitle}{Article}
\renewcommand{\shorttitle}{An efficient 0D-space conservative and positivity preserving  battery model}

\begin{document}
\maketitle
\begin{abstract}
Embedded software aims to provide the main characteristics of a cell in real time to manage and optimize battery usage. A complete multidimensional model is not tractable with the low computational resources available in-situ. A class of 0D-in-space models is derived from the P2D Doyle-Fuller-Newman system, with additional assumptions about the functions' shape in space, and strongly reduces the complexity while preserving the main characteristics over time. We propose a conservative, positivity-preserving model built from the P2D system based on the statement that the Butler-Volmer source term is constant in each subdomain (anode, separator, cathode). We develop a mathematically rigorous construction that provides the simplified model and design a scheme to solve the system. We highlight that the model guaranties the conservation property together with the positivity of the concentrations. In particular, the diffusion-limited current density that represents the maximum current that can be used while still preserving the positivity of the cation concentration is obtained explicitly.  
\end{abstract}
\keywords{Li-ion Battery \and Modelling \and Diffusion-driven intensity \and numerical simulations}
\section{Introduction}

The growing demand for electrochemical energy storage has shifted battery development from material-level optimization toward understanding and controlling complete electrochemical systems. Lithium-ion batteries, in particular, combine high energy and power density with relatively high efficiency and long cycle life, enabling their widespread use in portable electronics, electric vehicles, and stationary energy storage systems \cite{armand2008building, nitta2015liion}. However, the performance of a battery is not determined by a single material or electrochemical reaction. The cell results from the coupling of ionic transport in the electrolyte, electronic conductivity (in the solid phases), lithium diffusion inside the active material, charge-transfer reactions at the electrolyte--electrode interfaces, and the evolution of temperature and material properties during operation \cite{newman1962current, ramadesigan2012modeling}. These phenomena occur over different characteristic length and time scales and are strongly coupled through the local concentration, potential, current density, and reaction rate.

From a physical point of view, a rechargeable battery can therefore be regarded as a distributed electrochemical system rather than a simple electrical component. During charge and discharge, the imposed current generates spatial gradients of ionic concentration and electric potential in the electrolyte, while the electrochemical reaction transfers ions and electrons between the electrolyte and active materials. Inside the porous electrodes, the reaction is distributed over a large interfacial area and is coupled with diffusion inside the active particles. Consequently, the terminal voltage measured is only a macroscopic consequence of several internal processes. The description of these requires the simultaneous consideration of mass conservation, charge conservation, species transport, electrochemical kinetics, and thermodynamic equilibrium. 
This physical complexity explains the early need for mathematical models capable of connecting measurable quantities, such as current and voltage, with the internal states of the cell.

The historical development of battery modeling followed this need for quantitative description of internal cell behaviour. The theoretical analysis of porous electrodes established a framework in which current and reaction distribution could be described within a porous electrode rather than treating the electrode as a homogeneous surface \cite{newman1962current}. Doyle, Fuller and Newman introduced a physics-based model describing the coupled transport and electrochemical behaviour, followed by the formulation of the dual lithium ion insertion cell \cite{FDN94, DFN93}. These developments established what is now commonly referred to as the Doyle--Fuller--Newman (DFN), or pseudo-two-dimensional (P2D), framework. The importance of this approach is that the model not only reproduces the terminal voltage but also provides internal quantities. 

In the last three decades, modeling and numerical simulations have received increasing attention to develop, test, and provide in-depth information on all the complex mechanisms of the cell at any scale. Nowadays, many sophisticated battery models have been developed to address specific issues such as microstructure evolution, multiscale multi-domain coupling, electrochemical effects on the microscale, anisotropic behaviors, or thermal or mechanical degradation \cite{PAB22,ZSZ22,SSG23,APK26,MCS26}. All of these models require significant computational resources and generally do not aim to simulate or monitor batteries as a whole. 

On the other hand, macro-scale models, initiated with the Doyle-Fuller-Newman (DFN) model \cite{DFN93,FDN94,DN95,IMG23} describe the whole behavior of the cell by computing essential variables such as concentration, current density, and potential over a one-dimensional domain. It provides fine enough predictions regarding the major properties of the battery: simplified geometry, conductivity, diffusion, and concentration, among many others. Such modeling uses the one-dimensional variable $x$ following the normal vector direction of the cross-section in space together with the time variable $t$. Additionally, a pseudo-dimension $r$ is added to consider the effect of intercalation, assuming a spherical porous support of the electrodes. To reduce computational cost, we propose a mathematical reduction of the P2D equations to Single Particle Models (SPMe).

To sum up, any d-dimensional space model requires an advanced numerical solver based on Finite Element, Finite Difference, or Finite Volume methods equipped with, at least, a robust second-order time integrator. Simulations on modern computers take from a few minutes up to hours of running time and provide in-depth insight about the constituents of the cell. Nevertheless, the computational cost is still important for embedded computational resources, where the cells' evolution at the micro-nanoscale is not relevant. Modern battery packs feature embedded computing hardware that calculates, monitors, and predicts cell evolution in real time. Such models are fundamental to perform an efficient management of the cells that make up the whole battery. To deploy battery physics onto embedded computing hardware, we cannot use partial differential equation models, even for the less demanding Doyle-Fuller-Newman (DFN) model \cite{DV12,FL18}. They are too computationally demanding, as required by iterative solvers with numerous variables (several thousand). Moreover, fine-detail approximations are, in fact, out of the scope of a model that should handle the fundamental cells' information and their states. 

Simplification or Reduced Order Methods (ROM) aim at developing new mathematical and numerical models that reduce the effective time resolution while maintaining a good approximation (at least for the variable of interest). In particular, some critical properties such as conservation or concentration positivity have to be preserved. 

There exist several ways to reduce the complexity and computational cost using different techniques of ROM \cite{FPC15,LK22}. One of them \cite{SB07,SB09} consists of introducing a polynomial representation in space with time-dependent coefficients. The approximation is the summation of separated variables, {\it i.e.} functions, for instance, the product of time-dependent coefficients with polynomial functions in space (Galerkin method). The extreme situation corresponds to the consideration of a unique polynomial function in each subdomain (anode, cathode, and separator) but with different degrees corresponding to a simplified Galerkin method with only three elements. From another perspective, we employ a collocation technique on each subdomain using average and pointwise values at the interface of the physical quantities to provide a simpler model that enables handling of a few time-dependent unknowns.

Looking more cautiously at the literature, the choice of polynomials and their characterizations (degree, unknowns) is almost driven by intuition-based simplification \cite{DV12,DD18,YD22}. Moreover, most of the 0D models proposed in the literature do not necessarily fulfil fundamental physical properties such as conservation (charge, concentration) or positivity (\ce{Li^+} concentration). In particular, dealing with a 0D model that just uses average values as unknowns cannot avoid negative cation concentration in the dielectric at the collector's interface, leading to a non-physical model. 

In contrast, the simplified 0D model we propose follows a rigorous mathematical development to provide relations that involve average and interface values and respect the basic physical constraints. We apply an averaging procedure connecting mean values and interface values to preserve the conservation property (charge, moles, Kirchhoff law), concentrations' positivity, and provide an analytical expression of the limit diffusion intensity, {\it i.e.}, the maximum intensity such that the cations' concentration remains non-negative in the electrodes. Moreover, the model introduces the notion of target intensity (the intensity that the user would like to impose on the cell) that we compare with the real intensity (the intensity that the system can really deliver). 

To this end, we shall make two important statements to elaborate the model:
\begin{enumerate}
\item The variables follow a time-parametrized steady-state regime in space, {\it i.e.} the time derivatives are skipped, except for the concentrations in the electrodes (intercalated lithium in the anode and oxide-metallic lithium in the cathode).
\item We assume that the Butler-Volmer term is constant in space for each electrode, depending only on the concentrations' average.
\end{enumerate}
The first condition means that, except for some rare short transition situations, the time derivative of the current and the concentrations is neglected. The second condition seems quite restrictive, but the average value is a second-order approximation of a function over a small-sized domain (tens of micrometers). Therefore, we consider that, to deliver a simplified model with relevant accuracy, such a hypothesis is acceptable. One has to face a trade-off between simplicity and accuracy, and our assumption aims to balance the two antagonist objectives. As a result, the model is written as a time-dependent problem using average and pointwise values at the interface with around twenty variables. 

We prove that the constant (in space) Butler-Volmer source term assumption mathematically leads to a model where all the other variables (concentration, intensity, potential) are polynomials in space. Even better, we prove that the polynomial degree of the variables is not an intuition-based additional assumption, but a consequence of the second statement.

\section{The P2D model}
The standard P2D model \cite{DFN93,FDN94} consists of two porous electrodes separated by an intermediate layer, an ionically conducting separator, all filled with electrolyte. The negative electrode (anode) corresponds to a domain $\Omega_n=[x_{cn},x_{en}]$ and has length $L_n$, while the positive electrode (cathode) occupies $\Omega_p=[x_{ep},x_{cp}]$ and has length $L_p$. The intermediate layer (separator) occupies $\Omega_s=[x_{en},x_{ep}]$ and has length $L_s$. We set $x_{cn}=0$, so the total cell length is $L=L_n+L_s+L_p$ (see figure \ref{fig:P2D_figure}). 

\begin{figure}[ht]
    \centering
    \includegraphics[trim={0cm 5cm 0 0},clip,width=1\linewidth]{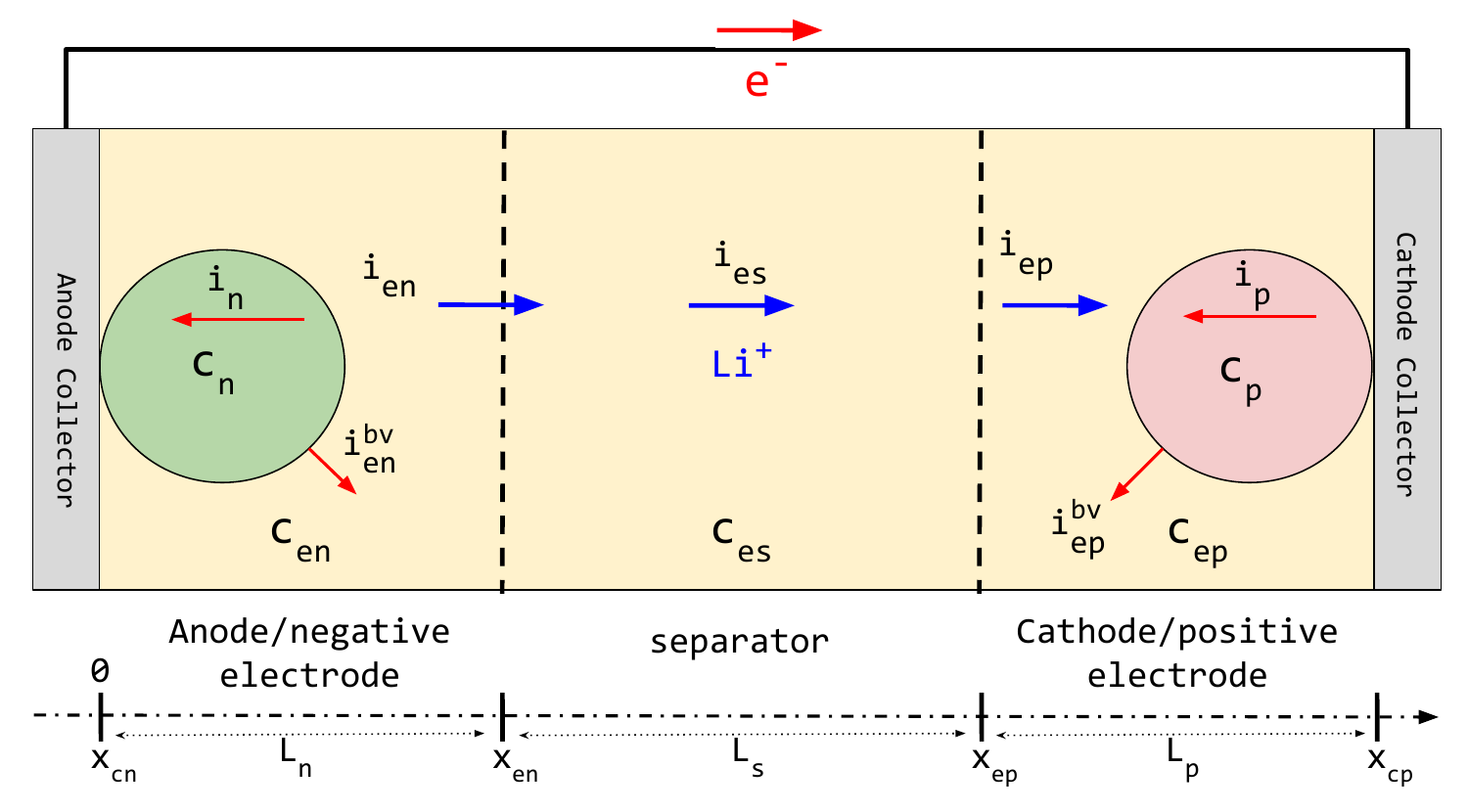}
    \caption{General design of a Lithium-ion cell during discharge}
    \label{fig:P2D_figure}
\end{figure}

In Table \ref{table::P2D_notations}, we provide the notation and variable names together with their dimension, used throughout the model. Note that $i$ $[A\,m^{-2}]$ denotes a surface current density, whereas $j$ $[A\,m^{-3}]$ represents a volume current density. 
\begin{table}[ht]
\begin{tabular}{| c |c| l |}
\hline
notation & dimension & name\\
\hline\hline
$c_n$, $c_p$& $[mol\,m^{-3}]$ & molar volume concentration in the negative and positive electrode  \\
$i_n$, $i_p$& $[A\,m^{-2}]$ & electronic current surface density in the negative and positive electrode  \\
$c_{en}$, $c_{ep}$, $c_{es}$ & $[mol\,m^{-3}]$ & molar concentration in the electrolyte\\
$\jBV{en}$, $\jBV{ep}$& $[A\,m^{-3}]$ & Butler-Volmer current volume density transfer in the negative and positive electrode \\
$i_{en}$, $i_{ep}$, $i_{es}$ & $[A\,m^{-2}]$ & cathodic current surface density of $\ce{Li^+}$ in the electrolyte\\
$\phi_n$, $\phi_p$& $[V]$ & electric potential in the negative and positive electrode  \\
$\Phi_{cn}$, $\Phi_{cp}$& $[V]$ & electric potential at the interface $x_{cn}$ and $x_{cp}$  \\
$\phi_{en}$, $\phi_{ep}$, $\phi_{es}$ & $[V]$ & ionic potential in the electrolyte \\
$\Phi_{en}$, $\Phi_{ep}$ & $[V]$ & ionic potential at the interfaces $x_{en}$ and $x_{ep}$ \\
$U_{en}$, $U_{ep}$ & $[V]$ & {open circuit potential of the negative and positive electrode}\\
$\eta_{ek}$, $\eta_{ek}$ & $[V]$ & overpotential  of the negative and positive electrode\\
$I$                      & $[A]$ & intensity between the positive and negative electrodes\\
\hline
$\alpha_n$, $\alpha_{p}$ & [-] &negative and positive electrode transfer coefficients\\
$a_n$, $a_p$ & $[m^{-1}]$ & active surface: ratio between the specific surface and the specific volume\\
$\varepsilon_n$, $\varepsilon_p$ , $\varepsilon_s$ & [-] & porosity of the negative, positive electrode and separator\\
$D_{n}$, $D_{p}$ & $[m^2\,s^{-1}]$ & molar diffusion in the electrodes\\
$D_{en}$, $D_{ep}$, $D_{es}$ & $[m^2\,s^{-1}]$ & molar diffusion of the electrolyte in the three subdomains\\
$A$& $[m^2]$ & orthogonal cut plane area of the apparatus\\
$\sigma_{en}$, $\sigma_{ep}$, $\sigma_{es}$&$[S\,m^{-1}]$ &ionic conductivity in each subdomain \\
\hline
\end{tabular}
\caption{Notations for the P2D cell model}
\label{table::P2D_notations}
\end{table}
\subsection{Species conservation}
The P2D model \cite{NT75,DFN93,FDN94,CDE22} couples macroscopic transport across the cell with microscopic diffusion inside active particles. The coordinate $x$ is the orthogonal direction to the cut-plane of the cell (macro-scale), $r$ is the radial coordinate describing the microscale diffusion effect of the oxidant/reduction component inside the porous electrodes, and $t$ denotes time.
Let $c_n(x,r,t)$ $[mol\,m^{-3}]$ be the molar concentration of the lithium atom in the negative electrode (usually \ce{C6Li}) and let $c_p(x,r,t)$ $[mol\,m^{-3}]$ denote the molar concentration of the lithium atom in the positive electrode (for example \ce{LiMnO2}, \ce{LiCoO2}). 

For each electrode $k\in\{n,p\}$ and each $x\in \Omega_k$, the conservation of species of the lithium compound is governed by Fick's diffusion law in the radial direction $r$:
\begin{eqnarray}\label{eq:single_ball_equation}
\partial_t c_{k} &=\displaystyle \frac{1}{r^2} \partial_r \biggl(r^2 D_{k} \partial_r c_{k}  \biggl),&\quad r\in \, ]0,R_k[.
\end{eqnarray}
Here, $R_k$ $[m]$ is the characteristic spherical radius of the particle and $D_{k}$ $[m^2\,s^{-1}]$ is the lithium diffusion coefficient in the electrode $k$.
The associated boundary conditions are
\begin{eqnarray}
D_{k}\partial_r c_{k}\Big|_{r=0}&=&0,\label{mole_r=0_BC}\\
D_{k}\partial_r c_{k} \Big|_{r=R_k} &=&-\frac{\iBV{ek}(x,t)}{F}= \displaystyle -\frac{\jBV{ek}(x,t)}{a_kF},\label{mole_r=R_BC}
\end{eqnarray}
where $a_k=3\varepsilon_k/R_k$ $[m^{-1}]$ is the  active surface, $\varepsilon_k$ $[m^3\,m^{-3}]$ is the electrode porosity, and $F$ $[C\, mol^{-1}]$ is Faraday's constant. 

The outward normal charge flux is controlled by the charge transfer per unit surface $\iBV{ek}(x,t)$ $[Am^{-2}]$, measured from the electrode toward the electrolyte. We recast it as a charge transfer per volume $\jBV{ek}(x,t)=a_k\,\iBV{ek}(x,t)$ $[A\,m^{-3}]$ given by the Butler-Volmer relation. With this convention, it is positive when \ce{Li -> Li+} (deintercalation), and negative when \ce{Li+ -> Li} (intercalation). 

\begin{remark}
The original Butler-Volmer (BV) relation \cite{DW20,RFR22,DV12} computes the exchange current density $\iBV{ek}$ per unit area $[Am^{-2}]$. Indeed, at the pore scale, the BV flux is given by a flux of charge across the interface between the electrode and the electrolyte. In a reduced one-dimensional model (the macroscopic average model), the volumetric current density $\jBV{ek}$ per unit volume $[Am^{-3}]$ is introduced as a source term in the species conservation equation by the macroscopic averaging (Volume of fluid (VOF) paradigm) used in a porous media system \cite{RBB12}. Consequently, in the literature, two current density versions coexist and may lead to some confusion in the applications. One has to carefully check whether the formulation uses a density current per unit area \cite{DW20} or per unit volume \cite{XNBD17}. \qedsymbol
\end{remark}
The conservation equation for \ce{Li+} in the electrolyte, with concentration  $c_{ek}(x,t)$ $[mol\,m^{-3}]$, $x \in \Omega_k$,  takes place in the three subdomains $\Omega_k$, $k\in\{n,p,s\}$ given by the relations
\begin{eqnarray}
\varepsilon_n \partial_t c_{en}&= \partial_x \biggl(D_{en} \partial_x c_{en} \biggl) +\displaystyle  \frac{(1-t^+)}{F} \jBV{en}&\quad x\in \Omega_n, \label{eq::conservation_Lin}\\
\varepsilon_p \partial_t c_{ep}&= \partial_x \biggl(D_{ep} \partial_x c_{ep} \biggl) +\displaystyle  \frac{(1-t^+)}{F} \jBV{ep}&\quad x\in \Omega_p, \label{eq::conservation_Lip}\\
\varepsilon_s \partial_t c_{es}&= \partial_x \biggl(D_{es} \partial_x c_{es} \biggl) &\quad x\in \Omega_s.\label{eq::conservation_Lis}
\end{eqnarray}
Here, $\varepsilon_s$ $[m^3\,m^{-3}]$ is the porosity of the separator, $t^+$ $[-]$ is the transfer number, and $D_{ek}$ $[m^2\,s^{-1}]$  is the effective diffusion of \ce{Li+} in the electrolyte filling the whole cell $\Omega$.

We prescribe that the cation concentration is continuous at the  electrode-separator interfaces $x=x_{en}$ and $x=x_{ep}$, while we state that no cation \ce{Li+} leaves the cell at the collector interface $x=x_{cn}$ and $x=x_{cp}$,
\begin{eqnarray}
&c_{en}(x_{en},t)=c_{es}(x_{en},t),\quad  &c_{ep}(x_{ep},t)=c_{es}(x_{ep},t),\label{eq_boundary_condition_Li_s}\\
&\partial_x c_{en}(x_{cn},t)=0, &\partial_x c_{ep}(x_{cp},t) =0.\label{eq_boundary_condition_Li_collector}
\end{eqnarray}

\subsection{Electronic current and potential}
For $k\in\{n,p\}$, we denote by $i_k=i_k(x,t)$, $[A\,m^{-2}]$ the electronic current density per unit area and $\phi_k=\phi_k(x,t)$,~$[V]$ the potential on the porous electrode $\Omega_k$. The potential is described by Ohm's law, which relates it to the current, while it depends on the electron transfer controlled by the Butler-Volmer source term.
\begin{eqnarray}
i_k&=&-\sigma_{k} \partial_x \phi_{k},\label{eq::potential_in electrode}\\ 
\partial_x i_k &=&-\jBV{ek}, \label{eq::electronic_charge_conservation}
\end{eqnarray}
with $\sigma_{k}$ $[S\,m^{-1}]$ the electronic conductivity. Moreover, we assume that no electron moves toward the electrolyte; hence, we prescribe
\begin{equation}\label{eq::electronic_flux_interface}
i_n(x_{en},t)=0,\qquad i_p(x_{ep},t)=0.
\end{equation}
Integrating \eqref{eq::electronic_charge_conservation} and using \eqref{eq::electronic_flux_interface}, we deduce the electronic flux at the collectors, 
\begin{eqnarray}
i_n(x_{cn},t)&=&+\int_{x_{cn}}^{x_{en}}\jBV{en}(x,t)\,dx, \label{eq::cutent_density_cn}\\
i_p(x_{cp},t)&=&-\int_{x_{ep}}^{x_{cp}}\jBV{ep}(x,t)\,dx. \label{eq::cutent_density_cp}
\end{eqnarray}
We define the potential at the collectors by
\begin{equation}\label{potencial_interface_collector}
\Phi_{cn}(t)=\phi_n(x_{cn},t),\qquad  \Phi_{cp}(t)=\phi_p(x_{cp},t).
\end{equation}
By convention, the electric current $I$ $[A]$ is the signed real number, positive when the electrons flow from the anode to the cathode, that is, when $\Phi_{cn}\leq \Phi_{cp}$. In this case, the electrons move out of $\Omega_n$ and we have $I(t)=A\,i_n(x_{cn},t)$, where $A\,[m^2]$ is the orthogonal cut plane area of the cell. 
On the other hand, the flux $i_p(x_{cp},t)$ represents the electronic density moving inward to $\Omega_p$. Charge conservation therefore gives $I(t)=A\,i_p(x_{cp},t)$. In conclusion, the current is given by
\begin{equation}\label{eq::current}
I(t)=A\,i_n(x_{cn},t)=A\,i_p(x_{cp},t).
\end{equation}
\begin{remark} For a discharging cell, $\jBV{en}>0$ and $\jBV{ep}<0$ means that we have the creation of electrons at the negative electrode, and the electrons are flowing from the anode towards the cathode with $I>0$. 
On the contrary, charging the cell means that $\jBV{en}<0$ and $\jBV{ep}>0$, we have a consumption of electrons at the negative electrode and $I<0$.\qedsymbol
\end{remark}
\subsection{Ionic current and potential}
The cation current density per unit area  $i_{ek}\equiv i_{ek}(x,t)$ $[Am^{-2}]$ contains a density current by conduction and a contribution driven by the \ce{Li+} concentration gradient. In the three subdomains, it is given by
\begin{eqnarray}
i_{en}&=&-\sigma_{en}\Big [\partial_x \phi_{en}+(2t^+-1)\frac{RT}{F}\partial_x \big [\ln(c_{en})\big ] \Big ], \quad x\in \Omega_n,\label{ionic_conservation_n}\\
i_{ep}&=&-\sigma_{ep}\Big [\partial_x \phi_{ep}+(2t^+-1)\frac{RT}{F}\partial_x \big [\ln(c_{ep})\big ]\Big ], \quad x\in \Omega_p,\label{ionic_conservation_p}\\
i_{es}&=&-\sigma_{es}\Big [\partial_x \phi_{es}+(2t^+-1)\frac{RT}{F}\partial_x \big [\ln(c_{es})\big ]\Big ], \quad x\in \Omega_s,\label{ionic_conservation_s}
\end{eqnarray}
with $\phi_{ek}(x,t)$ $[V]$, the ionic potential at $\Omega_k$,  $\sigma_{ek}$ $[Sm^{-1}]$ the ionic conductivity in $\Omega_k$,  $R$ $[Jmol^{-1}K^{-1}]$ is the universal gas constant and $T$ $[K]$ is the temperature. We assume the continuity of flux and potential at the electrode-separator interfaces
\begin{eqnarray}
&\phi_{en}(x_{en},t)=\phi_{es}(x_{en},t), &\phi_{ep}(x_{ep},t)=\phi_{es}(x_{ep},t),\\
&i_{en}(x_{en},t)=i_{es}(x_{en},t), &i_{ep}(x_{ep},t)=i_{es}(x_{ep},t). \label{ionic_flux_continuity_separator}
\end{eqnarray}
On the other hand, we assume that no cation leaves the cell at the collector's interface, so 
\begin{equation}\label{no_ion_transfer_at_collector}
i_{en}(x_{cn},t)=0,\qquad i_{ep}(x_{cp},t)=0.
\end{equation}
Charge conservation for the cation current density then reads
\begin{equation}\label{cation_density_current_conservation}
\partial_x i_{en}=\jBV{en},\quad \partial_x i_{es}=0,\quad \partial_x i_{ep}=\jBV{ep}.
\end{equation}
\subsection{Butler-Volmer equation}\label{sec::BVsusbsec}
The Butler-Volmer equation is the critical relation that describes charge transfer at the electrode-electrolyte interface. The  equation has an expression with separated variables as $\jBV{ek}=j^0_{ek}\,\cdot \chi_{ek}$, where $j^0_{ek}$ $[A\,m^{-3}]$, $k \in \{n,p\}$ is the contribution associated with the concentration of reactive compounds, while $\chi_{ek}$ $[-]$ depends only on the potentials. The usual expression of the reaction equation is \cite{DFN93,DW20}
$$
i^0_{ek}=F\,r_k\, \big (c_k\big )^{\alpha_k} \big (c_{ek} \big )^{1-\alpha_k} \big (c^{max}_k-c_k\big )^{1-\alpha_k},\qquad j_{ek}^0=a_k\,i^0_{ek},
$$
with $r_k$ the reaction rate of dimension $[mol^{(\alpha_k-1)} m^{(4-3\alpha_k)}s^{-1}]$, $i^0_{ek}$ $[A\,m^{-2}]$ the current density per surface unit and $j^0_{ek}$ the corresponding volumetric current density. The parameters $\alpha_{n},\ \alpha_{p}\in [0,1]$ $[-]$ are the anodic and cathodic transfer coefficients, respectively.

An alternative approach is based on a reference state (marked by the symbol $\star$) with concentrations $c_k^{\star}$, $c_{ek}^{\star}$. The corresponding  reference current densities are \cite{DW20}
$$
i^{0\star}_{ek}=F\,r_k\, \big (c_k^\star\big )^{\alpha_k} \big (c_{ek}^\star \big )^{1-\alpha_k} \big (c^{max}_k-c_k^\star\big )^{1-\alpha_k},\qquad 
j^{0\star}_{ek}=a_k\,i^{0\star}_{ek}
$$
 The exchange current density $j^0_{ek}$ is then written as a variation of this reference state
\begin{equation}\label{eq::exchange_density_current}
j^0_{ek}=j^{0\star}_{ek} 
\left ( \frac{c_k}{c_k^{\star}}\right )^{\alpha_k}
\left ( \frac{c_k^{max}-c_k}{c_k^{max}-c_k^{\star}}\right )^{1-\alpha_{k}}
\left ( \frac{c_{ek}}{c_{ek}^{\star}}\right )^{1-\alpha_{k}}.  
\end{equation}

On the other hand, the density current transfer is also controlled by the overpotential $\eta_{ek}$ $[V]$, given by
\begin{equation}\label{defn_overpotential}
\eta_{ek}=\phi_k-\phi_{ek}-U_{ek},
\end{equation}
where $U_{ek}$ $[V]$ is the open-circuit potential associated with the intercalation/deintercalation process. 

To express the function $\chi_{ek}$ compactly, we introduce the b-hyperbolic function $\bsinh$ (see section \ref{sec::b-trigonometry}) and set $f=\frac{F}{RT}$
\begin{align}\label{defn_chi}
\chi_{ek}(\eta_{ek})&=\exp\Big (\alpha_k f \eta_{ek}\Big )-\exp\Big ((\alpha_k-1) f \eta_{ek}\Big )\\
&=2\frac{\exp\Big (2\alpha_k f \eta_{ek}/2\Big )-\exp\Big (2(\alpha_k-1) f \eta_{ek}/2\Big )}{2} \nonumber\\
&=2\,\bsinh( f \eta_{ek}/2;\alpha_k).\nonumber
\end{align}
Therefore, the volumetric Butler-Volmer current density becomes
$$
\jBV{ek}=2\,j^0_{ek}\,\bsinh\big (f \eta_{ek}/2;\alpha_k\big ).
$$
\begin{remark}
When $\alpha_k=1/2$, we recover the hyperbolic sine function and the current density reads
$$
\jBV{ek}=2\,j^0_{ek}\,\sinh\big (f \eta_{ek}/2\big ). 
$$
For $\alpha\neq 1/2$ we define a new class of special functions $\bsinh(\eta,\beta)$ and $\bcosh(\eta,\beta)$, referred to here as $b$-hyperbolic trigonometry function, presented and studied in section \ref{sec::b-trigonometry}. \qedsymbol
\end{remark}

\section{The average procedure}
The P2D model 
depends on the spatial variable $x$ and the time $t$ together with an additional variable $r$ to provide a description of the electrode state at a microscopic level. Our objective is to reduce this spatially distributed description to a low-dimensional model. We therefore average the functions regarding $r$ and $x$, while retaining  the function values at specific points (collector--electrode interfaces $x_{cn}$, $x_{cp}$ and electrode-electrolyte $x_{en}$, $x_{ep}$).  

\subsection{Electrode species conservation}
We detail the average procedure for the negative electrode, but the same technique applies to the positive one. Integrating equation  \eqref{eq:single_ball_equation} over $[0,R_n]$ and using the boundary conditions \eqref{mole_r=0_BC}-\eqref{mole_r=R_BC}, we get
$$
\partial_t \int_0^{R_n} c_{n}(x,r,t)r^2 dr=  D_{n} (R_n)^2 \partial_r c_{n}(x,R_n,t)=-\frac{\jBV{en}(x,t)}{a_nF}(R_n)^2.
$$
Furthermore, let $c_n(x,t)$ denote the $r$-average of the function $c_n(x,r,t)$. We then have
\begin{eqnarray*}
\partial_t c_{n}(x,t)&=&\frac{3}{4\pi(R_n)^3}\times  \,4\pi\, \partial_t \int_0^{R_n} c_{n}(x,r,t)r^2 dr\\
&=&-\frac{3}{4\pi(R_n)^3} \times 4\pi\, \frac{\jBV{en}(x,t)}{a_n\,F}(R_n)^2,\\
&=&-\frac{3}{R_n}\frac{\jBV{en}(x,t)}{a_n\,F}.
\end{eqnarray*} 
Following \cite{XNBD17}, we take $a_k R_k=3\varepsilon_k$ and the $r$-average equation then reads
$$
\varepsilon_n \partial_t c_{n}(x,t)=-\frac{\jBV{en}(x,t)}{F}.
$$
The $x$-average $\ovc_n(t)$ comes from the relation over $\Omega_n$ and dividing by $L_n=|\Omega_n|$, we obtain the evolution equation for the spatially averaged concentration
$$
\frac{d\,\ovc_{n}}{dt} (t)=\frac{1}{L_n}\frac{d}{dt}\int_{x_{cn}}^{x_{en}}  c_{n}(x,t) dx=\frac{1}{L_n}\int_{x_{cn}}^{x_{en}}  \partial_t c_{n}(x,t) dx
=-\frac{1}{\varepsilon_nF\,L_n}\int_{x_{cn}}^{x_{en}} \jBV{en}(x,t)dx=-\frac{1}{\varepsilon_nF}\ovjBV{en}(t),
$$
where $\ovjBV{en}$ in $[A\,m^{-3}]$ is the average current transfer in the negative electrode and $\ovc_{n}$ ($[mol\,m^{-3}]$) is the corresponding average concentration of lithium. 
In summary, the variation of the average Lithium concentration in the electrode is given by
\begin{equation}\label{eq::anode_species_conc}
\varepsilon_n\frac{d\,\ovc_{n}}{dt} (t)=-\frac{\ovjBV{en}(t)}{F}.    
\end{equation}
Applying the same argument to the positive electrode, we deduce the corresponding species conservation equation in $\Omega_p$, 
\begin{equation}\label{eq::catode_species_conc}
\varepsilon_p\frac{d\,\ovc_{p}}{dt}(t)=-\frac{\ovjBV{ep}(t)}{F},    
\end{equation}
where $\ovc_{p}$ and $\ovjBV{ep}$ are the averages of $c_p$ and $\jBV{ep}$, respectively, in $\Omega_p$.
\begin{remark}
The well-balanced relation between the creation and consumption of \ce{Li} is then given by
\begin{equation}\label{eq_conservation_BV}
L_n\ \ovjBV{en}(t)+L_p\ \ovjBV{ep}(t)=0. \hskip3em \qedsymbol
\end{equation}
\end{remark}

\subsection{Electrolyte species conservation}
To obtain the $x$-average of the function $c_{en}(x,t)$, we consider the  \ce{Li+} ion conservation law in $\Omega_n$, equation \eqref{eq::conservation_Lin}. Integrating it over the electrode and using  the boundary condition \eqref{eq_boundary_condition_Li_collector} gives
\begin{eqnarray*}
\varepsilon_n L_n\frac{d\,\ovc_{en}}{dt} &=&\varepsilon_n \frac{d}{dt}  \int_{x_{cn}}^{x_{en}}c_{en}(x,t)\,dx
                                       =\int_{x_{cn}}^{x_{en}} \varepsilon_n \partial_t c_{en}(x,t)\,dx,\\
&=& \int_{x_{cn}}^{x_{en}}\partial_x \biggl(D_{en} \partial_x c_{en} \biggl) dx+
\displaystyle  \int_{x_{cn}}^{x_{en}} \frac{(1-t^+)}{F} \jBV{en}dx,\\
&=& D_{en}(\partial_x c_{en}(x_{en},t)- \partial_x c_{en}(x_{cn},t)) +\displaystyle  L_n\frac{(1-t^+)}{F} \ovjBV{en}(t),\\
&=& D_{en}\partial_x c_{en}(x_{en},t)+\displaystyle  \frac{(1-t^+)}{F} L_n\ovjBV{en}(t).
\end{eqnarray*}

Similarly, using the relations \eqref{eq::conservation_Lip} and \eqref{eq_boundary_condition_Li_collector}, we obtain the relation
\begin{eqnarray*}
\varepsilon_p L_p \frac{d\,\ovc_{ep}}{dt}(t) &=&-D_{ep}\partial_x c_{ep}(x_{ep},t)+\displaystyle  \frac{(1-t^+)}{F} L_p \ovjBV{ep}(t).
\end{eqnarray*}
\vskip 1em

Since there is no source term in the separator, integrating \eqref{eq::conservation_Lis} over $\Omega_s$ gives
$$
 L_s\varepsilon_s \frac{d}{dt} \ovc_{es}=D_{es}\partial_x c_{es}(x_{ep},t)-D_{es} \partial_x c_{es}(x_{en},t).
$$
At this stage, since no interfacial accumulation occurs, we assume the continuity of the cation flux at the interface $x=x_{en}$ and $x=x_{ep}$ given by condition \eqref{eq_boundary_condition_Li_s}, and we get the following
\begin{eqnarray*}
D_{es} \partial_x c_{es}(x_{en},t)&=&D_{en}\partial_x c_{en}(x_{en},t),\\
D_{es} \partial_x c_{es}(x_{ep},t)&=&D_{ep}\partial_x c_{ep}(x_{ep},t). 
\end{eqnarray*}
Adding the three relations and using property \eqref{eq_conservation_BV} provides  the global conservation of cations \ce{Li+} 
$$
\frac{d}{dt} \Big (L_n\varepsilon_n \ovc_{en}+L_s\varepsilon_s \ovc_{es}+L_p\varepsilon_p \ovc_{ep}\Big )=0.
$$
The well-balanced relation between the creation and consumption of cations is then given by
\begin{equation}\label{eq_conservation_Li+}
L_n\varepsilon_n \ovc_{en}(t)+L_s\varepsilon_s \ovc_{es}(t)+L_p\varepsilon_p \ovc_{ep}(t)=M^0,
\end{equation}
with $M^0$ calculated from the initial condition.

\subsection{Electronic current and potential}
\subsubsection{Electronic current in the electrodes}
Let $x\in \Omega_n$. From equation \eqref{eq::electronic_charge_conservation} and the boundary condition \eqref{eq::electronic_flux_interface}, integration over $[x,x_{en}]$ gives
\begin{equation}\label{integral_relation_j_n}
i_n(x,t)=i_n(x,t)-i_n(x_{en},t)=-\int_{x}^{x_{en}}\partial_x i_n(s,t)\,ds =+\int_{x}^{x_{en}} \jBV{en}(s,t)\,ds.
\end{equation}
Similarly, for any $x\in\Omega_p$ we integrate equation \eqref{eq::electronic_charge_conservation} over $[x_{ep},x]$ and  the boundary condition \eqref{eq::electronic_flux_interface} yields
\begin{equation}\label{integral_relation_j_p}
i_p(x,t)=i_p(x,t)-i_p(x_{ep},t)=\int_{x_{ep}}^x\partial_x i_p(s,t)\,ds=- \int_{x_{ep}}^{x} \jBV{ep}(s,t)\,ds.
\end{equation}

In particular,  from the relations \eqref{eq::cutent_density_cn}-\eqref{eq::cutent_density_cp} we conclude with \eqref{eq::current}
\begin{eqnarray}
I(t)&=&+A\,L_n \ \ovjBV{en}(t),\label{eq::average_intensity_n}\\ 
I(t)&=&-A\,L_p \, \ovjBV{ep}(t). \label{eq::average_intensity_p}
\end{eqnarray}

\subsubsection{Electric potential in the electrodes}
The potential is given by equation \eqref{eq::potential_in electrode} and we deduce from \eqref{integral_relation_j_n}
$$
\sigma_n\partial_x \phi_{n}(x,t)=-i_n(x,t)=-\int_{x}^{x_{en}} \jBV{en}(s,t)\,ds=-\int_{x_{cn}}^{x_{en}} \lambda^u(s,x)\jBV{en}(s,t)\,ds,
$$
where $\lambda^u(s,x)=1$ if $s>x$ and $\lambda^u(s,x)=0$ otherwise. 

Integrating the last relation over $\Omega_n=[x_{cn},x_{en}]$ provides the potentials at the extremity of the domain,
$$
\phi_n(x_{en},t)-\phi_n(x_{cn},t)=-\frac{1}{\sigma_n} 
\int_{x_{cn}}^{x_{en}} \int_{x_{cn}}^{x_{en}} \lambda^u(s,x) \jBV{en}(s,t)\,ds\,dx.
$$
From definition \eqref{potencial_interface_collector}, we deduce the electronic potential at the anode collector,  
$$
\Phi_{cn}(t)=\phi_n(x_{en},t)+\frac{1}{\sigma_n} \int_{x_{cn}}^{x_{en}} \int_{x_{cn}}^{x_{en}} \lambda^u(s,x)\jBV{en}(s,t)\,ds\,dx.
$$
Noting that $\displaystyle\int_{x_{cn}}^{x_{en}} \lambda^u(s,x)\,dx=s-x_{cn}$, we deduce
\begin{equation}
 \label{eq::elec_potential_anode}
\Phi_{cn}(t)=\phi_n(x_{en},t)+\frac{1}{\sigma_n} \int_{x_{cn}}^{x_{en}} (s-x_{cn})\jBV{en}(s,t)\,ds.
\end{equation}

Following the same technique and using the relation \eqref{eq::potential_in electrode} for the cathode, we deduce
$$
\sigma_p\partial_x \phi_{p}(x,t)=-i_{p}(x,t)=\int_{x_{ep}}^{x} \jBV{ep}(s,t)\,ds=\int_{x_{ep}}^{x_{cp}} \lambda^{\ell}(s,x)\jBV{ep}(s,t)\,ds
$$
with $\lambda^{\ell}(s,x)=1$ for $s<x$ and zero otherwise. From definition \eqref{potencial_interface_collector}, integration over $\Omega_p$ provides the relation
$$
\Phi_{cp}(t)=\phi_p(x_{ep},t)+\frac{1}{\sigma_p} \int_{x_{ep}}^{x_{cp}} \int_{x_{ep}}^{x_{cp}} \lambda^{\ell}(s,x)\jBV{ep}(s,t)\,ds\,dx.
$$
Noting that  $\displaystyle \int_{x_{ep}}^{x_{cp}} \lambda^{\ell}(s,x)\, dx=x_{cp}-s$, we get
\begin{equation}\label{eq::elec_potential_catode} 
\Phi_{cp}(t)=\phi_p(x_{ep},t)+\frac{1}{\sigma_p} \int_{x_{ep}}^{x_{cp}} (x_{cp}-s)\jBV{ep}(s,t)\,ds.
\end{equation}

\subsection{Ionic current and potential}

From the boundary conditions  $i_{en}(x_{cn},t)=0$, (see \eqref{no_ion_transfer_at_collector}) and  $\partial_x c_{en}(x_{cn},t)=0$, (see \eqref{eq_boundary_condition_Li_collector}), we deduce from equation \eqref{ionic_conservation_n}, computed at the point $x_{cn}$, the boundary condition of the electrolyte potential at the anode collector
$$
\partial_x\phi_{en}(x_{cn},t)=0.
$$
Similarly, from the boundary conditions $i_{ep}(x_{cp},t)=0$  (see \eqref{no_ion_transfer_at_collector}) and  $\partial_x c_{ep}(x_{cp},t)=0$ (see \eqref{eq_boundary_condition_Li_collector}), we deduce, using equation \eqref{ionic_conservation_p} at the point $x_{cp}$, the boundary condition of the electrolyte potential at the cathode collector
$$
\partial_x\phi_{ep}(x_{cp},t)=0.
$$ 
Integration of the $j_{en}^{BV}$ conservation equation  \eqref{cation_density_current_conservation} into $\Omega_n$ yields
$$
i_{en}(x_{en},t)-i_{en}(x_{cn},t)=\int_{x_{cn}}^{x_{en}} \jBV{en}(x,t)\,dx \implies A\,i_{en}(x_{en},t)=A\,L_n \ovjBV{en}(t)=+I(t).
$$
Similarly, the $i_{ep}$ conservation equation   \eqref{cation_density_current_conservation} over $\Omega_p$ yields
$$
A\,i_{ep}(x_{ep},t)=-A\,L_p \ovjBV{ep}(t)=+I(t).
$$ 
Integrating the conservative equation    $\partial_x i_{es}=0$ (see \eqref{cation_density_current_conservation}) over $\Omega_s$ provides the flux conservation
$$
i_{es}(x_{en},t)=i_{es}(x_{ep},t).
$$
 Evaluating the relation \eqref{ionic_conservation_s} at the point $x_{en}$ and $x_{ep}$ gives with 
\begin{eqnarray*}
i_{es}(x_{en},t)&=&-\sigma_{es}\left [
\partial_x \phi_{es}(x_{en},t) +\frac{(2t^+-1)}{f}\; \frac{\partial_x c_{es}(x_{en},t)}{c_{es}(x_{en},t)}
\right ],\\
i_{es}(x_{ep},t)&=&-\sigma_{es}\left [
\partial_x \phi_{es}(x_{ep},t) +\frac{(2t^+-1)}{f}\; \frac{\partial_x c_{es}(x_{ep},t)}{c_{es}(x_{ep},t)}
\right ],
\end{eqnarray*}
with $f=\frac{F}{RT}$ $[V^{-1}]$ $=[CJ^{-1}]$.

\section{Towards the 0D model}
The average values of physical quantities, together with pointwise values at the interfaces, lead to a differential-algebraic system coupling average values over the subdomains of current densities, concentrations, or electric potentials at the interfaces. At that stage, we have just derived new relations using strictly mathematical procedures, and no additional assumptions have been introduced.
Nevertheless, the system is not closed since no specific information about the shape of the physical variables in space has been defined. To elaborate on the 0D model, we shall prescribe some acceptable approximations in space (linear or quadratic shape), enabling us to close the algebraic-differential system. 
\subsection{Some mathematical properties about the approximations}
We first recall some properties of general approximations. Let $\Lambda=[x_1,x_2]$ be an interval and $g(x)=f(u(x))$ where $f$ and $u$ are real functions that are smooth enough in $\Lambda$. We recall that the average is given by $\displaystyle \bar u=\frac{1}{x_2-x_1} \int_{x_1}^{x_2} u(x)\,dx$  and similarly $\bar g$ the average value of $u$ and $g$, respectively, over the domain $\Lambda$, while $\displaystyle m=\frac{x_1+x_2}{2}$ is the midpoint of $\Lambda$. We have the following properties.
\begin{itemize}
\item Second-order approximation of average and midpoint values.
$$
\overline{u}=u(m)+O\big( (x_2-x_1)^2 \big),\qquad \overline{g}= f(\overline{u})+O\big( (x_2-x_1)^2 \big).
$$
 Note that if $f$ is affine, we have the equality $\overline{g}= f(\overline{u})$. 
\item Second-order approximation of a product: for any regular functions $u$ and $v$, one has
$$
\overline{uv}=(uv)(m)+O\big( (x_2-x_1)^2 \big)= \overline{u}\, \overline{v}+O\big( (x_2-x_1)^2 \big).
$$
\end{itemize}
\subsection{The main hypothesis: the Butler-Volmer average}\label{sec:approx:BV}
The principal assumption we make concerns the Butler-Volmer equation. We assume that the rate of production/consumption over the domains $\Omega_k$, $k\in \{n,p\}$,  is approximated by its average to second order in space. This assumption is admissible if the variations of $\jBV{ek}(x,t)$ are small with respect to the average values. Consequently, we state
$$
\jBV{ek}(x,t)= \ovjBV{ek}(t)+O\big (L_k^2\big ).
$$

The second hypothesis we state is the substitution of the average of a function by the function of the averages, namely (see section \ref{sec::BVsusbsec})
$$
\ovjBV{ek}=\frac{1}{L_k}\int_{\Omega_k} \chi_{ek}(\eta_{ek})\cdot j^0_{ek}(c_{ek},c_k)\, dx=
\chi_{ek}(\oveta_{ek})\cdot j^0_{ek}(\ovc_{ek},\ovc_k)+O\big(L_k^2\big ).
$$
In conclusion, we substitute the original Butler-Volmer relation with the approximation
\begin{equation}\label{asumption_BV}
\ovjBV{ek}=\chi_{ek}(\oveta_{ek})\cdot j^0_{ek}(\ovc_{ek},\ovc_k).
\end{equation}

\subsection{Approximation of the current density in the electrodes}
Since we assume that the Butler-Volmer source term is approximated by a constant function in space $\ovjBV{ek}$(t), the electronic density current $i_n(x,t)$ satisfies $\partial_x i_n=-\ovjBV{en} (t)$ (see \eqref{eq::electronic_charge_conservation}) and suggests a linear approximation in space. Taking advantage of the boundary condition $i_n(x_{en},t)=0$ and  $i_{n}(x_{cn},t)=L_n\ovjBV{en}(t)$  (see  \eqref{eq::electronic_flux_interface}  and \eqref{eq::cutent_density_cn})), we deduce
$$
i_n(x,t)=  (x_{en}-x) \ovjBV{en}(t).
$$
Similarly, the density current for the positive electrode is now given by $\partial_x i_p(x,t)= -\ovjBV{ep}(t)$  and, with the boundary condition $i_p(x_{ep},t)=0$,  (see \eqref{eq::electronic_flux_interface}) and  $i_p(x_{cp},t)=-L_p\ovjBV{ep}(t)$ (see \eqref{eq::cutent_density_cp}), we deduce the approximation
$$
i_p(x,t)= (x_{ep}-x) \ovjBV{ep}(t).
$$

\subsection{Approximation of the electronic potential in the electrodes}
Since the current density is a linear function in space, we deduce that the electric potential is a quadratic function.
Using the approximation of the Butler-Volmer term and the potential in the collector of the negative electrode given by \eqref{eq::elec_potential_anode}, 
$$
\Phi_{cn}(t)=\phi_n(x_{en},t)+ \frac{1}{\sigma_n}\ovjBV{en}(t) \int_{x_{cn}}^{x_{en}} (s-x_{cn})\,ds,
$$
and, by integration, provides the equation for the anode potential
$$
\Phi_{cn}(t)=\phi_n(x_{en},t)+\frac{(L_n)^2}{2\sigma_n} \ovjBV{en}(t).
$$
On the other hand, a second-order average approximation of the negative electrode potential is given by the trapezoidal formula 
$$
\ovphi_{n}(t)= \frac{1}{2}\big (\phi_n(x_{en},t)+\phi_{n}(x_{cn},t) \big )
$$
and we deduce using the relation \eqref{eq::average_intensity_n} 
\begin{equation}\label{eq::average_potential_anode}
\ovphi_{n}(t)=\Phi_{cn}(t)-\frac{L_n}{4A\,\sigma_n}I(t).    
\end{equation}

Similarly, on the positive electrode, from the relation \eqref{eq::elec_potential_catode}, we deduce the approximation
$$
\Phi_{cp}(t)=\phi_p(x_{ep},t)+\frac{(L_p)^2}{2\sigma_p} \ovjBV{ep}(t).
$$
Using once again the trapezoidal rule together with \eqref{potencial_interface_collector} and \eqref{eq::average_intensity_p}, we give an approximation of the average of the electric potential in the positive electrode
\begin{equation}\label{eq::average_potential_cathode}
\ovphi_{p}(t)=\Phi_{cp}(t)+\frac{L_p}{4A\,\sigma_p} I(t).
\end{equation}
\begin{remark}
Note that the potential difference $\Delta_{elec}=\Phi_{cp}-\Phi_{cn}$ corresponds to the voltage between the anode and the cell cathode. \qedsymbol
\end{remark}

\subsection{Approximation of the ionic density current in the electrolyte}
Since we assume that $\jBV{ek}(x,t)=\ovjBV{ek}(t)$ is constant in space, from \eqref{cation_density_current_conservation} and applying \eqref{eq::average_intensity_n}, the ionic current is given by the equation
$$
\partial_x i_{en}(x,t) = \frac{I(t)}{A\,L_n}.
$$
Therefore, $i_{en}(x,t)$ is linear in space. Using the boundary condition  \eqref{no_ion_transfer_at_collector}, we get the expression
\begin{equation}\label{jen_linear_space}
 i_{en}(x,t)=(x-x_{cn})\frac{I(t)}{A\,L_n}. 
\end{equation}
We apply a similar procedure for the ionic current density in the positive electrode and, from  \eqref{eq::average_intensity_p} together with the boundary condition  $i_{ep}(x_{cp},t)=0$, (see \eqref{no_ion_transfer_at_collector}) we deduce the linear approximation
\begin{equation}\label{jep_linear_space}
i_{ep}(x,t)=(x_{cp}-x)\frac{I(t)}{A\,L_p}.
\end{equation}
Finally, the current conservation equation $\partial_x i_{es}=0$ in $\Omega_s$ (see \eqref{cation_density_current_conservation})  together with the boundary conditions \eqref{ionic_flux_continuity_separator}, $i_{es}(x_{en},t)=i_{en}(x_{en},t)$  indicates that the density current is approximated by constant values in space and leads to the relation
\begin{equation}\label{jes_constant_space}
i_{es}(x,t)= \frac{I(t)}{A}.
\end{equation}

\subsection{Approximation of the  potential in the electrolyte}
\subsubsection{Electrolyte potential in the electrodes}
We now derive the approximation of the electrolyte potential in the subdomains by taking advantage of the previous stages. The construction is achieved in three steps.

\hskip 1em $\bullet$ {\sc step 1.}
We recall the original equation  \eqref{ionic_conservation_n} for the ionic density current
$$ 
i_{en}(x,t)=-\sigma_{en}\left [\partial_x \phi_{en}(x,t)+\frac{(2t^+-1)}{f}\partial_x \ln(c_{en}(x,t))\right ].
$$
Since $i_{en}$ is a linear function in space given by \eqref{jen_linear_space}, we get 
$$
(x-x_{cn})\frac{I(t)}{A\sigma_{en}L_n}= -\partial_x \phi_{en}(x,t)-
\frac{(2t^+-1)}{f}\partial_x \ln(c_{en}(x,t)).
$$

\hskip 1em $\bullet$ {\sc step 2.} Integrating over $[x_{cn},x]$, we get
\begin{equation}\label{eq::potential_quadratic}
\phi_{en}(x,t)= \phi_{en}(x_{cn},t)-\frac{(x-x_{cn})^2}{2L_n} 
\frac{I(t)}{A\sigma_{en}}-\frac{(2t^+-1)}{f}\ln\left (\frac{c_{en}(x,t)}{c_{en}(x_{cn},t}\right )
.    
\end{equation}
In particular, for $x=x_{en}$ and denoting $\Phi_{en}(t)=\phi_{en}(x_{en},t)$, we have the ionic potential at the interface between the negative electrode and the separator given by
$$
\Phi_{en}(t)=\phi_{en}(x_{cn},t)-\frac{L_n}{2}\frac{I(t)}{A\sigma_{en}}
-\frac{(2t^+-1)}{f}\ln \left (\frac{c_{en}(x_{en},t)}{c_{en}(x_{cn},t)}\right ).
$$

\hskip 1em $\bullet$ {\sc step 3.}
Finally, we compute an approximation of the average potential using the trapezoidal rule 
$$\displaystyle \ovphi_{en}(t)= \frac{1}{2}\big (\Phi_{en}(t)+\phi_{en}(x_{cn},t)\big )$$ 
and we get the following
\begin{equation}\label{eq::potential_drop_anode}
\Delta_{en}=\ovphi_{en}(t)-\Phi_{en}(t)=\frac{L_n}{4} \frac{I(t)}{A\sigma_{en}}
+\frac{(2t^+-1)}{2f}\ln \left (\frac{c_{en}(x_{en},t)}{c_{en}(x_{cn},t)}\right ).    
\end{equation}

\vskip 1em
We proceed in a similar way in the positive electrode using the original equation \eqref{ionic_conservation_p} and the linear approximation of $i_{ep}$ in space  \eqref{jep_linear_space}. Integration over $[x,x_{cp}]$ gives
$$
\phi_{ep}(x,t)=\phi_{ep}(x_{cp},t)+\frac{(x-x_{cp})^2}{2L_p}\frac{I(t)}{A\,\sigma_{en}} 
+\frac{(2t^+-1)}{f}\ln \left (\frac{c_{ep}(x_{cp},t)}{c_{ep}(x,t)}\right ).
$$

At last, using the trapezoidal rule for the average approximation and setting $\Phi_{ep}(t)=\phi_{ep}(x_{ep},t)$, we get 
\begin{equation}\label{eq::potential_drop_cathode}
\Delta_{ep}=\ovphi_{ep}(t)-\Phi_{ep}(t)=-\frac{L_p}{4}\frac{I(t)}{A\sigma_{ep}}
-\frac{(2t^+-1)}{2f}\ln \left (\frac{c_{ep}(x_{cp},t)}{c_{ep}(x_{ep},t)}\right ).
\end{equation}

\subsubsection{Electrolyte potential in the separator}
From the initial relation  \eqref{ionic_conservation_s},  together with relation  \eqref{jes_constant_space} $\displaystyle i_{es}(x,t)=\frac{I(t)}{A}$, we deduce
\begin{equation}\label{potential_intensity}
 \frac{I(t)}{A\sigma_{es}}= -\partial_x \phi_{es}(x,t)-\frac{(2t^+-1)}{f}\partial_x \ln (c_{es}(x,t)).   
\end{equation}
Integration over $[x_{en},x]$ provides the relation
$$
\phi_{es}(x,t)=\phi_{es}(x_{en},t)-(x-x_{en})\frac{I(t)}{A\sigma_{es}}-\frac{(2t^+-1)}{f}\ln \left (\frac{c_{es}(x,t)}{c_{es}(x_{en},t)}\right ).
$$
We have assumed that the continuity of the potential at the separator interface yields $\phi_{es}(x_{en},t)=\phi_{en}(x_{en},t)=\Phi_{en}(t)$ and $\phi_{es}(x_{ep},t)=\phi_{ep}(x_{ep},t)=\Phi_{ep}(t)$.
We use the last relation at point $x=x_{ep}$ and finally, we get the relation
\begin{equation}\label{eq::potential_drop_separator}
\Delta_{es}=\Phi_{ep}(t)-\Phi_{en}(t)=-\frac{L_s}{A\sigma_{es}}I(t)
-\frac{(2t^+-1)}{f}\ln \left (\frac{c_{es}(x_{ep},t)}{c_{es}(x_{en},t)}\right ).
\end{equation}

\subsection{Approximation of \ce{Li+} concentration}
Constructing an accurate approximation of the \ce{Li+} concentration is critical for two reasons. On the one hand, we have to assess $\ovc_{en}$ and $\ovc_{ep}$ to compute the Butler-Volmer contribution and, on the other hand, we need the \ce{Li^+} concentrations at the interfaces $x_{cn}$, $x_{en}$, $x_{ep}$, and $x_{cp}$ to evaluate $\Delta_{en}$, $\Delta_{es}$, and $\Delta_{ep}$.

An accurate approach will consist of solving the full parabolic equation, both in time and space. To provide a fully 0D model, we state that the ion concentrations are almost governed by a time-parametrized steady-state equation; thus the time derivative in equations \eqref{eq::conservation_Lin}, \eqref{eq::conservation_Lip}, \eqref{eq::conservation_Lis} is cancelled. Then we get with  \eqref{eq::average_intensity_n} and \eqref{eq::average_intensity_p}, the new system
\begin{eqnarray}
-\partial_x \biggl(D_{en} \partial_x c_{en} \biggl)&=&\displaystyle  \frac{(1-t^+)}{F} \jBV{en}=\frac{(1-t^+)}{A\,F\,L_n}I(t)\quad x\in \Omega_n, \label{eq::conservation_Lin_steady}\\
-\partial_x \biggl(D_{ep} \partial_x c_{ep} \biggl)&=&\displaystyle  \frac{(1-t^+)}{F} \jBV{ep}=-\frac{(1-t^+)}{A\,F\,L_p}I(t)\quad x\in \Omega_p, \label{eq::conservation_Lip_steady}\\
-\partial_x \biggl(D_{es} \partial_x c_{es} \biggl)&=&0\quad x\in \Omega_s, \label{eq::conservation_Lis_steady}
\end{eqnarray}
\begin{remark}
Cancelling the time derivative does not mean that the problem is time-independent. We just say that the concentrations of the different species vary sufficiently slowly to consider that the term $\partial_t c_e$ is not significant with respect to the other contributions. However, the parameters, such as the source term, may still depend on time. \qedsymbol
\end{remark}

Let $\displaystyle C(t)=\frac{(1-t^+)}{AF}I(t)$, integrating relations \eqref{eq::conservation_Lin_steady} in $[x_{cn},x]$ and \eqref{eq::conservation_Lip_steady} in $[x,x_{cp}]$, together with the homogeneous flux condition at the collectors, {\it i.e.,} $\partial_x c_{en}(x_{cn},t)=0$ and $\partial_x c_{ep}(x_{cp},t)=0$, give the following
\begin{eqnarray*}
D_{en}\partial_x c_{en}(x,t) &=&-\frac{(x-x_{cn})}{L_n}C(t)\quad x\in \Omega_n,\\
D_{ep}\partial_x c_{ep}(x,t)&=&+\frac{(x-x_{cp})}{L_p}C(t)\quad x\in \Omega_p, \\
D_{es}\partial_x c_{es}(x,t)&=&D_{es}\partial_x c_{es}(x_m,t) \quad x\in \Omega_s. 
\end{eqnarray*}
with $x_m$ the midpoint of $[x_{en},x_{ep}]$. Flux continuity provides the compatibility condition of the ionic flux.
$$
D_{ep} \partial_x c_{ep}(x_{ep},t)=D_{en} \partial_x c_{en}(x_{en},t)=D_{es}\partial_x c_{es}(x_m,t) =-C(t).
$$

Integrating  equations \eqref{eq::conservation_Lin_steady}, \eqref{eq::conservation_Lis_steady}, \eqref{eq::conservation_Lip_steady} twice in space, we obtain the analytical expressions for the three functions 
\begin{equation}
\left \{ \begin{array}{l}
c_{en}(x,t)= c_{en,c}(t)-\displaystyle C(t)\frac{(x-x_{cn})^2}{2D_{en}L_n}\quad x\in \Omega_n,\\
c_{ep}(x,t)= c_{ep,c}(t)+\displaystyle C(t)\frac{(x-x_{cp})^2}{2L_pD_{ep}}\quad x\in \Omega_p,\\
c_{es}(x,t)= c_{es,m}(t)-\displaystyle C(t)\frac{(x-x_{m})}{D_{es}}\quad x\in \Omega_s,
\end{array}\right . \label{eq::concentration_0D}
\end{equation}
 with  $c_{en,c}(t)=c_{en}(x_{cn},t)$, $c_{es,m}(t)=c_{es}(x_m,t)$, $c_{ep,c}(t)=c_{ep}(x_{cp},t)$. 

The continuity in $x_{en}$, $x_{ep}$ (see \eqref{eq_boundary_condition_Li_s}) reads
$$
c_{en,c}(t)-C(t)\frac{L_n}{2D_{en}}=c_{es,m}(t)+C(t)\frac{L_s}{2D_{es}},\qquad
c_{ep,c}(t)+C(t)\frac{L_p}{2D_{ep}}=c_{es,m}(t)-C(t)\frac{L_s}{2D_{es}}.
$$
On the other hand, the average values are given by 
\begin{equation}\label{eq::constant_concentration}
\ovc_{en}(t)=c_{en,c}(t)-C(t)\frac{L_n}{6D_{en}},\quad  \ovc_{es}(t)=c_{es,m}(t),\quad \ovc_{ep}(t)=c_{ep,c}(t)+C(t)\frac{L_p}{6D_{ep}}
\end{equation}
and, using the previous relations, we deduce the continuity relations, 
$$
\ovc_{en}(t)-C(t)\frac{L_n}{3D_{en}}=\ovc_{es}(t)+C(t)\frac{L_s}{2D_{es}},\qquad \ovc_{ep}(t)+C(t)\frac{L_p}{3D_{ep}}=\ovc_{es}(t)-C(t)\frac{L_s}{2D_{es}}.
$$
Finally, mass conservation of the \ce{Li^+} \eqref{eq_conservation_Li+} reads
$$
\varepsilon_n L_n \ovc_{en}+\varepsilon_s L_s \ovc_{es}+\varepsilon_p L_p \ovc_{ep}=(\varepsilon_n L_n +\varepsilon_s L_s +\varepsilon_p L_p)\,c_{e}^0=M^0
$$ 
with $c_{e}^0$ denoting  the   initial concentration of \ce{Li^+}. In conclusion, the average concentrations in the dielectric are given by the linear system
\begin{equation}
\left \{\begin{array}{l}
\displaystyle \ovc_{en}(t)=\ovc_{es}(t)+C(t)\left ( \frac{L_n}{3D_{en}}+\frac{L_s}{2D_{es}}\right ),\\[0.5em]
\displaystyle \ovc_{ep}(t)=\ovc_{es}(t)-C(t)\left ( \frac{L_p}{3D_{ep}}+\frac{L_s}{2D_{es}}\right ),\\[0.5em]
\displaystyle \varepsilon_n L_n \ovc_{en}(t)+\varepsilon_s L_s \ovc_{es}(t)+\varepsilon_p L_p \ovc_{ep}(t)=M^0.
\end{array}
\right .  \label{eq::average_concentration_0D}
\end{equation}

The linear system \eqref{eq::average_concentration_0D} shows that the time dependency is only related to the function $C(t)$, that is, the intensity $I(t)$. In conclusion, the average concentrations in the dielectric only explicitly depend on the intensity $I$ through the variable $C$. Hence, consider the new linear system using $(\hatc_{en},\hatc_{es},\hatc_{ep})$ as unknown functions regarded the intensity
\begin{equation}
\left \{\begin{array}{l}
\displaystyle \hatc_{en}(I)=\hatc_{es}(I)+\frac{(1-t^+)}{AF}\left ( \frac{L_n}{3D_{en}}+\frac{L_s}{2D_{es}}\right )I,\\[0.5em]
\displaystyle \hatc_{ep}(I)=\hatc_{es}(I)-\frac{(1-t^+)}{AF}\left ( \frac{L_p}{3D_{ep}}+\frac{L_s}{2D_{es}}\right )I,\\[0.5em]
\displaystyle \varepsilon_n L_n \hatc_{en}(I)+\varepsilon_s L_s \hatc_{es}(I)+\varepsilon_p L_p \hatc_{ep}(I)=M^0.
\end{array}
\right .  \label{eq::average_concentration_0D_hat}
\end{equation}
We deduce that
\begin{equation}\label{eq::average_concentration_hat_defn}
   \hatc_{en}(I(t))\equiv\ovc_{en}(t), \quad  \hatc_{ep}(I(t))\equiv\ovc_{ep}(t), \quad  \hatc_{es}(I(t))\equiv\ovc_{es}(t).
\end{equation}

Analogously, we find that concentrations at the boundaries, $c_{en,c}$, $c_{ep,c}$, $c_{es,m}$  also explicitly depend directly on the intensity $I$ and and from \eqref{eq::constant_concentration} we have the following 
\begin{equation}\label{eq::constant_concentration_hat}
\hatc_{en,c}(I)=\hatc_{en}(I)+\frac{(1-t^+)}{AF}\frac{L_n}{6D_{en}}I,\quad  
\hatc_{es,m}(I)=\hatc_{es}(I),\quad 
\hatc_{ep,c}(I)=\hatc_{ep}(I)-\frac{(1-t^+)}{AF}\frac{L_p}{6D_{ep}}I.
\end{equation}
and satisfy the chain-rule
\begin{equation}\label{eq::constant_concentration_hat_defn}
   \hatc_{en,c}(I(t))\equiv c_{en,c}(t), \quad  \hatc_{ep,c}(I(t))\equiv c_{ep,c}(t), \quad  \hatc_{es,m}(I(t))\equiv c_{es,m}(t),
\end{equation}
\begin{remark}\label{rem_I-linearity-boundary}
By construction, the average approximation functions $\hatc_{en}(I)$, $\hatc_{es}(I)$, $\hatc_{ep}(I)$ together with the values at the interface with the collectors $\hatc_{en,c}(I)$ and $\hatc_{en,c}(I)$ are linear in terms of intensity $I$.
\end{remark}
\subsection{Overpotential and Butler-Volmer relation}
From section \ref{sec:approx:BV}, we approximate the average of the density current  $j^0_{ek}$, $k \in \{n,p\}$, transfer equation \eqref{eq::exchange_density_current}, with an approximation only depending on the concentrations' average and overpotentials. We write it in the form 
\begin{eqnarray*}
\ovj_{en}^0(t)&=&j^{0*}_{en}\, \left ( \frac{\ovc_n(t)}{c_n^{*}}\right )^{\alpha_n}\left ( \frac{c_n^{max}-\ovc_n(t)}{c_n^{max}-c_n^{*}}\right )^{1-\alpha_{n}}\left ( \frac{\ovc_{en}(t)}{c_{en}^{*}}\right )^{1-\alpha_{n}},\\
\ovj_{ep}^0(t)&=&j^{0*}_{ep}\, \left ( \frac{\ovc_p(t)}{c_p^{*}}\right )^{\alpha_p}\left ( \frac{c_p^{max}-\ovc_p(t)}{c_p^{max}-c_p^{*}}\right )^{1-\alpha_{p}}\left ( \frac{\ovc_{ep}(t)}{c_{ep}^{*}}\right )^{1-\alpha_{p}}.
\end{eqnarray*}

On the other hand, the Butler-Volmer source term is assumed to be constant in the porous electrode (see section \ref{sec:approx:BV}); then we have the following
\begin{eqnarray*}
\ovjBV{en}(t)&=&\ovj_{en}^0(t)\cdot\chi_{en} \big(\oveta_{en}(t)\big),\\
\ovjBV{ep}(t)&=&\ovj_{ep}^0(t)\cdot\chi_{ep} \big(\oveta_{ep}(t)\big).
\end{eqnarray*}
Using the special function $\bsinh(\eta;\beta)$, we deduce from \eqref{defn_chi}
$$
\chi_{en} \big(\oveta_{en}(t)\big)=2\bsinh(f\oveta_{en}(t)/2;\alpha_n),\quad
\chi_{ep} \big(\oveta_{ep}(t)\big)=2\bsinh(f\oveta_{ep}(t)/2;\alpha_p).
$$
Furthermore, we rely on the electric and ionic potentials via the over-potential definition, that is \eqref{defn_overpotential},
$$
\oveta_{en}(t)=\ovphi_{n}(t)-\ovphi_{en}(t)-U_{en},\qquad 
\oveta_{ep}(t)=\ovphi_{p}(t)-\ovphi_{ep}(t)-U_{ep}.
$$

Finally, the positive and negative electrodes are connected via the external circuit
\begin{equation} \label{eq::ext_circuit}
\Phi_{cp}-\Phi_{cn}=R_{ext} I+U_{ext}    
\end{equation}
with $R_{ext}$ $[\Omega]$ the user's external resistance and $U_{ext}$ $[V]$ the external potential (for instance, when charging the cell).

\section{The time-parametrized model}
In the previous section, we introduced a set of space-independent variables together with relations deriving from the original problem. This section is devoted to the construction of the 0D model by coupling the different sub-systems.   

\subsection{The concentration formulation}\label{subsec::time_conce_form}

Given the intensity $I=I(t)$, we deduce the concentration in the electrodes from \eqref{eq::anode_species_conc}-\eqref{eq::catode_species_conc} and the relations \eqref{eq::average_intensity_n}-\eqref{eq::average_intensity_p}, 
\begin{equation}\label{eq::species_conc}
\varepsilon_n\,\frac{d\ovc_{n}}{dt}(t)=-\frac{I(t)}{A\,F \,L_n},\qquad
\varepsilon_p \,\frac{d\ovc_{p}}{dt}(t)=+\frac{I(t)}{A\,F\,L_p}.   
\end{equation}
On the other hand, from relation \eqref{eq::average_concentration_0D}, we deduce $\ovc_{es}(t)$ with 
\begin{equation}\label{continuos:separator}
(\varepsilon_n L_n +\varepsilon_s L_s +\varepsilon_p L_p)\,\ovc_{es}(t)=M^0
+C(t) \left \{ \varepsilon_p L_p\left (\frac{L_p}{3D_{ep}}+\frac{L_s}{2D_{es}} \right )
-\varepsilon_n L_n\left (\frac{L_n}{3D_{en}}+\frac{L_s}{2D_{es}}  \right )
\right\}
\end{equation}
and then the two other average values, $\ovc_{en}, \ovc_{ep}$, using the linear system \eqref{eq::average_concentration_0D}
\begin{equation}\label{eq::electrolyte_conc}
\ovc_{en}(t)=\ovc_{es}(t)+C(t)\left (\frac{L_n}{3D_{en}}+\frac{L_s}{2D_{es}}\right ),\qquad 
\ovc_{ep}(t)=\ovc_{es}(t)-C(t)\left (\frac{L_p}{3D_{ep}}+\frac{L_s}{2D_{es}}\right ).  
\end{equation}

Moreover, we have at the interfaces 
\begin{equation}\label{eq::electrolyte_bound}
 \begin{aligned}
c_{en}(x_{cn},t)=\ovc_{en}(t) +  C(t)\frac{L_n}{6D_{en}},&\qquad 
c_{es}(x_{en},t)=\ovc_{es}(t) +  C(t)\frac{L_s}{2D_{es}},\\ 
c_{es}(x_{ep},t)=\ovc_{es}(t) -  C(t)\frac{L_s}{2D_{es}},&\qquad 
c_{ep}(x_{cp},t)=\ovc_{ep}(t) -  C(t)\frac{L_p}{6D_{ep}}.
\end{aligned}   
\end{equation}

\subsection{The potential formulation}
Assuming that the average concentrations are known, we determine the current density and the potential by solving a time-parametrized system. We decompose the potential difference $\Phi_{cp}-\Phi_{cn}$ between the two electrodes with
\begin{eqnarray*}
\Phi_{cp}-\Phi_{cn}&=&(\Phi_{cp}-\ovphi_p)+(\ovphi_p-\ovphi_{ep})+(\ovphi_{ep}-\Phi_{ep})+(\Phi_{ep}-\Phi_{en})+(\Phi_{en}-\ovphi_{en})- (\ovphi_{en}-\ovphi_n)+(\ovphi_n-\Phi_{cn}).
\end{eqnarray*}
Using the relations \eqref{defn_overpotential}, \eqref{eq::potential_drop_anode}, \eqref{eq::potential_drop_cathode}, \eqref{eq::potential_drop_separator}, \eqref{eq::average_potential_anode} and \eqref{eq::average_potential_cathode}, we rewrite the decomposition in the following form
\begin{eqnarray*}
\Phi_{cp}-\Phi_{cn}&=& U_{ep}+\eta_{ep}-U_{en}-\eta_{en}-R_{int}I-\Delta_{c}
\end{eqnarray*}
where
$$
R_{int}= \left (\frac{L_p}{4A\,\sigma_p} +\frac{L_p}{4A\sigma_{ep}}+\frac{L_s}{A\sigma_{es}}+\frac{L_n}{4A\sigma_{en}}+ \frac{L_n}{4A\,\sigma_n} \right )
$$
stands for the internal resistance of the cell, and
\begin{equation}\label{eq::exact_DELTA_c}
\Delta_c=\frac{2t^+-1}{2f}
\left [
\ln \left ( \frac{c_{ep}(x_{cp},t)}{c_{es}(x_{ep},t)}\right ) +
2\ln \left (\frac{c_{es}(x_{ep},t)}{c_{es}(x_{en},t)}\right )  +
\ln \left ( \frac{c_{es}(x_{en},t)}{c_{en}(x_{cn},t)}\right )
\right ]
\end{equation}
is the potential drop due to the gradient of the ionic concentration we rewrite
$$
\Delta_c=\frac{2t^+-1}{2f}
\ln \left ( \frac{c_{ep}(x_{cp},t)\,c_{es}(x_{ep},t)}{c_{es}(x_{en},t)\,c_{en}(x_{cn},t)}\right ).
$$

The potential decomposition now reads
\begin{eqnarray*}
\ \Phi_{cp}-\Phi_{cn}&=&U_{ep}+\eta_{ep}-U_{en}-\eta_{en}-R_{int}I-\Delta_{c},\\ 
I&=&+\iota_n(\ovc_n,\ovc_{en})\ \bsinh(f\eta_{en}/2),\\
I&=&-\iota_p(\ovc_p,\ovc_{ep})\ \bsinh(f\eta_{ep}/2),  
\end{eqnarray*}
where, (see \eqref{eq::average_intensity_n}, \eqref{eq::average_intensity_p}),
\begin{eqnarray}
\iota_n(\ovc_n,\ovc_{en})&=&2F\,A\, L_n\,a_n\,r_n\, \big (\ovc_{en}\big )^{\alpha_n} \big (\ovc_n\,(c^{max}_n-\ovc_n)\big )^{1-\alpha_n},\label{def_iota_n}\\ 
\iota_p(\ovc_p,\ovc_{ep})&=&2F\,A\, L_p\,a_p\,r_p\, \big (\ovc_{ep}\big )^{\alpha_p} \big (\ovc_p\,(c^{max}_p-\ovc_p)\big )^{1-\alpha_p}.\label{def_iota_p}
\end{eqnarray}
On the other hand, we have to fulfill the relation $\Phi_{cp}-\Phi_{cn}=R_{ext}I+U_{ext}$ which corresponds to the external circuit, (see \eqref{eq::ext_circuit}).

Equating the potential difference between the internal and external circuits provides the non-linear equation  for the intensity
$$
U_{ep}-U_{en}-U_{ext}-\Delta_c=g(I,R_{ext}),
$$
with 
$$
g(I,R_{ext})=(R_{ext}+R_{int})I+\frac{2}{f}\left [ \absinh\left ( \frac{I}{\iota_n}\right)+\absinh\left ( \frac{I}{\iota_p}\right)\right ],
$$
where $\iota_n$ and $\iota_p$ are non-negative functions. 
\begin{remark}
For the particular case $\alpha_n= \alpha_p=1/2$, we recover the standard hyperbolic functions, and the relation reads
$$
g(I,R_{ext})=(R_{ext}+R_{int})I+\frac{2}{f} \left [ \arg\sinh\left ( \frac{I}{\iota_n}\right)+\arg\sinh\left ( \frac{I}{\iota_p}\right) \right ].
\hskip3em \qedsymbol
$$
\end{remark}
Let us define the residual potential
\begin{equation}\label{eq:residual_potential}
V_{res}(I,R_{ext})=U_{ep}-U_{en}-U_{ext}-\Delta_c-g(I,R_{ext}).
\end{equation}
The solutions of the cell problem are given by the solution of $V_{res}(I,R_{ext})=0$, that is, we seek $I$ such that $V_{res} = 0$ given the user parameters $R_{ext}$ and $U_{ext}$ and the averaged concentrations.

In conclusion, function $V_{res}(I,R_{ext})$  has the following properties:
\begin{itemize}
\item  is continuous, linear in $R_{ext}$, differentiable in $I$;
\item  is a strictly decreasing function in order to $I$ since $R_{ext}\geq 0$,
$$
\lim_{I\to -\infty}V_{res}(I,R_{ext})=+\infty,\qquad \lim_{I\to +\infty}V_{res}(I,R_{ext})=-\infty. 
$$
\end{itemize}
\begin{prop}\label{prop:existence_uniqueness}
For any $R_{ext}\geq 0$, there exists a unique intensity $\bar I$ such that  $V_{res}(\bar I,R_{ext})=0$.
\end{prop}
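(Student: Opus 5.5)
The plan is to apply the intermediate value theorem to the continuous function $I\mapsto V_{res}(I,R_{ext})$ and then use strict monotonicity for uniqueness. The one subtlety is that $\Delta_c$ and the coefficients $\iota_n,\iota_p$ enter $V_{res}$. At the time level where the proposition is used (Section \ref{subsec::time_conce_form}), the averaged concentrations $\ovc_n,\ovc_p,\ovc_{en},\ovc_{ep}$ and the interface values are treated as given data. Hence $U_{ep}-U_{en}-U_{ext}-\Delta_c$ is a fixed real number $K$, and $\iota_n,\iota_p>0$ are fixed constants. We therefore have $V_{res}(I,R_{ext})=K-g(I,R_{ext})$, and the question reduces to the properties of $g$ in $I$.

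\emph{Step 1: monotonicity and continuity of $g$.} The function $\bsinh(\cdot;\alpha)$ is continuous and strictly increasing on $\mathbb{R}$. This follows because its derivative is $\bcosh(\cdot;\alpha)$, a positive combination of $\alpha e^{2\alpha y}$ and $(1-\alpha)e^{2(\alpha-1)y}$, which is $>0$. Moreover $\bsinh(y;\alpha)\to\pm\infty$ as $y\to\pm\infty$ (for $\alpha\in(0,1)$ both exponentials contribute with the right signs). Hence $\bsinh(\cdot;\alpha)$ is a bijection of $\mathbb{R}$, so $\absinh$ is well defined, continuous, strictly increasing, and unbounded in both directions. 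Since $\iota_n,\iota_p>0$, the map $I\mapsto \absinh(I/\iota_n)+\absinh(I/\iota_p)$ inherits these properties. Adding $(R_{ext}+R_{int})I$ with $R_{ext}+R_{int}\ge R_{int}>0$ preserves strict increase. So $g(\cdot,R_{ext})$ is continuous, strictly increasing, and tends to $\pm\infty$ as $I\to\pm\infty$. In fact the linear term alone already forces these limits, since the $\absinh$ terms have the same sign as $I$.

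\emph{Step 2: existence.} From Step 1, $V_{res}(I,R_{ext})\to+\infty$ as $I\to-\infty$ and $V_{res}(I,R_{ext})\to-\infty$ as $I\to+\infty$. So we may pick $I_-<I_+$ with $V_{res}(I_-,R_{ext})>0>V_{res}(I_+,R_{ext})$. The intermediate value theorem on $[I_-,I_+]$ then gives some $\bar I$ with $V_{res}(\bar I,R_{ext})=0$.

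\emph{Step 3: uniqueness.} If $I_1<I_2$ were both zeros, strict decrease of $V_{res}$ would give $V_{res}(I_1,R_{ext})>V_{res}(I_2,R_{ext})$, a contradiction.

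The only step needing care is Step 1, namely that $\bsinh(\cdot;\alpha)$ is a strictly increasing bijection, so that $\absinh$ makes sense. This relies on the properties of the $b$-hyperbolic functions from section \ref{sec::b-trigonometry}, which I take as given. For the edge cases $\alpha\in\{0,1\}$, one exponential is constant and the function is bounded on one side; the argument nonetheless survives because the linear term $(R_{ext}+R_{int})I$ alone gives the required limits. One must also note that positivity of $\iota_k$ needs the averaged concentrations to lie in the open physical range $\ovc_k\in(0,c_k^{max})$ and $\ovc_{ek}>0$, which I would state as a standing assumption.
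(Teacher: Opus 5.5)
Your proof is correct and follows the paper's route. The paper supports the proposition only through the properties it lists just before it: continuity, strict decrease in $I$ for $R_{ext}\ge 0$, and the limits $\pm\infty$, with the concentrations (and hence $\Delta_c$, $\iota_n$, $\iota_p$) held fixed. You supply the missing details behind those properties. Two small corrections: first, $\partial_y\bsinh(y;\alpha)=\alpha e^{2\alpha y}+(1-\alpha)e^{2(\alpha-1)y}$ equals $\bcosh(y;\alpha)$ only when $\alpha=1/2$, although your explicit formula is the right one. Second, for $\alpha\in\{0,1\}$ the real issue is that $\absinh$ is defined only on a half-line, so $g(\cdot,R_{ext})$ exists only on a subinterval of intensities. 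Existence then comes from $\absinh$ blowing up at the finite endpoint(s) of its domain, not from the linear term $(R_{ext}+R_{int})I$.
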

\subsection{A note about the critical situations}\label{sec::steady-state}
From a practical point of view, successive charge and discharge cell are carried out to assess the number of cycles that the device may support. To this end, a setup is built to perform the operation under a constant intensity and, in that way, define the number of hours for one cycle. In common language, practitioners deal with charge or discharge under {\bf constant intensity} $I_0$ or target intensity. However, constant intensity is not always feasible, particularly when the cell approaches a fully charged or fully discharged state. These critical situations occur when either $\iota_n$ or $\iota_p$ tends to zero because one of the relevant concentrations approaches zero or its maximum admissible value.\\
Consequently, the intensity has to decrease to zero when reaching the limit of the cell capacity. The intensity has to drop, even to zero, when the cell reaches an extreme state, {\it i.e.} $\iota_n=0$ or $\iota_p=0$. This is a critical issue from the modelling point of view, and the numerical method has to be designed to simulate the extreme situations. 
\begin{remark}
Function $g(I,R_{ext})$ also depends on other parameters, such as the average concentrations. We do not mention it explicitly for the sake of simplicity, but when the concentration dependency would be critical, we overload the notation with $g(I_0,R_{ext};\ovc_n,\ovc_{en},\ovc_p,\ovc_{ep})$. \qedsymbol
\end{remark}

In \cite{RFR22,DGR18} a similar analysis is provided on the behaviour of the current density close to the extreme cases.
Basically, reactive material concentrations in the anode, cathode, and electrolyte drive the electrical potential of the cell, while external resistance and electromotive force drive the intensity. The question is to assess whether the cell is in a condition to deliver (or receive) a target intensity $I_0$ prescribed by the user, that is, if it is possible to adjust the resistance $R_{ext}$ to reach the target intensity.

Extreme situations occur when $\iota_n(\ovc_n,\ovc_{en})$ or $\iota_p(\ovc_p,\ovc_{ep})$ go to zero. A non-null intensity  $I=I_0>0$ cannot be maintained. Indeed, assume for example that $\ovc_n\to 0$, then $\iota_n(\ovc_n,\ovc_{en})\to 0$ and $g(I_0,R_{ext};\ovc_n,\ovc_{en},\ovc_p,\ovc_{ep})\to +\infty$. In conclusion, even with $R_{ext}=0$, we no longer achieve the condition $V_{res}(I_0;0)=0$. Consequently, $I$ has to converge to zero when we reach a critical situation.

\subsection{Charge and discharge computations}
The early-stage analysis consists of determining whether we have a charging or discharging situation, given the open-circuit potentials $U_{en}$, $U_{ep}$, the external counter-electromotive potential $U_{ext}$ (motor or electrical source), and the potential $\Delta_c$ due to the \ce{Li^+} gradient. To this end, we first compute $V_{00}=V_{res}(0,0)$ corresponding to the open circuit. We have a charging situation if $V_{00}<0$ and a discharging situation if $V_{00}>0$. In the charging configuration, the target intensity $I_0<0$ is mandatory, while $I_0>0$  in the discharging configuration.

\subsubsection{Discharging the cell}
Since $V_{00}>0$, the second stage is to determine whether the target intensity $I_0>0$ is achievable, so we compute $V_0=V_{res}(I_0;0)$ and reach the following alternatives
\begin{itemize}
\item if $V_0\geq 0$, the external circuit equipped with resistance $\displaystyle R_{ext}=\frac{V_0}{I_0}$ satisfies the equilibrium $V_{res}(I_0;R_{ext})=0$ by construction. 
\item In contrast, $V_0<0$ shows that the target intensity is too high, and the unique solution of $V_{res}(\bar I;0)=0$ gives a lower intensity $\bar I\geq 0$. In particular, if one of the two components $\iota_n(\ovc_n,\ovc_{en})$ or $\iota_p(\ovc_p,\ovc_{ep})$ goes to zero, intensity $\bar I$ must go to zero, but the quotient with the $\arg\sinh$ denominator converges to a limit. 
\end{itemize}

\subsubsection{Charging the cell}
We proceed with the case $V_{00}<0$ where we charge the cell by applying an additional potential $U_{ext}$ of the battery charger.
\begin{itemize}
\item The target intensity $I_0<0$ is achievable if the residual potential $V_0=V_{res}(I_0;0)\leq 0$. In that case, the external resistance is given by $\displaystyle R_{ext}=\frac{V_0}{I_0}$.
\item  In contrast, if $V_0\geq 0$, intensity $I=I_0<0$ is not possible and we determine $\bar I<0$ by solving the non-linear problem $V_{res}(\bar I;0)=0$ to recover the equilibrium.
\end{itemize}

\section{Numerical Methods}
We shall consider two distinct kinds of simulation. On the one hand, we use the steady-state model where, given the concentrations, we compute the potentials and deduce the intensity; in particular, we assess whether we reach the target intensity or a lower intensity with a null external resistance. On the other hand, we consider the non-stationary case (charge and discharge) where the simulations are carried out up to a final time $\tend$. In that case, the experimental time interval $[0,\tend]$ is uniformly divided into $K+1$ sub-steps $t^k=k\Delta t$, $k=0,\cdots, K$, with $\displaystyle \Delta t=\tend/K$. The concentrations $\ovc(t)$ are approximated by $\ovc[k]\approx c(t^k)$ at time $t^k$. 

\subsection{Numerical solution for the concentrations}\label{sec:numerical_concentration_problem}
We construct numerical solutions for the concentration species assuming that the intensity is given (see subsection \ref{subsec::time_conce_form}). Let $I[k]=I(t^k)$, the given function defined in the interval $[0,\tend]$, and look for the average concentrations $\ovc_{n}[k]$, $\ovc_{p}[k]$, $\ovc_{en}[k]$, $\ovc_{es}[k]$, $\ovc_{ep}[k]$. 

\subsubsection{Computation of the concentrations}
We approximate the concentrations in the electrode with a simple forward Euler scheme for $\ovc_{n}$ {(see \eqref{eq::species_conc})
\begin{equation}\label{discrete:electrode}
\ovc_{n}[k]=\ovc_{n}[k-1]-\Delta t\, \frac{I[k]}{A\,F\varepsilon_n \,L_n},\qquad
\ovc_{p}[k]=\ovc_{p}[k-1]+\Delta t\, \frac{I[k]}{A\,F\varepsilon_p \,L_p},
\end{equation}
with $\ovc_{n}[0]$, $\ovc_{p}[0]$ given by the initial condition.

On the other hand, the electrolyte concentration in the separator is given by (see \eqref{continuos:separator})
\begin{equation}\label{discrete:separator}
(\varepsilon_n L_n +\varepsilon_s L_s +\varepsilon_p L_p)\,\ovc_{es}[k]=M^0+C[k]\left\{
\varepsilon_p L_p\left ( \frac{L_p}{3D_{ep}}+\frac{L_s}{2D_{es}} \right ) 
-\varepsilon_n L_n\left ( \frac{L_n}{3D_{en}}+\frac{L_s}{2D_{es}} \right )
\right\}.
\end{equation}
with $C[k]=\frac{(1-t^+)}{AF}I[k]$ and $M^0=\varepsilon_n L_n \ovc_{en}[0]+\varepsilon_s L_s \ovc_{es}[0]+\varepsilon_p L_p\ovc_{ep}[0]$ given by the initial conditions.

We deduce the average concentrations with (see \eqref{eq::electrolyte_conc})
\begin{equation}\label{discrete:electrolyte}
\ovc_{en}[k]=\ovc_{es}[k]+C[k]\left( \frac{L_n}{3D_{en}}+\frac{L_s}{2D_{es}}\right ),\qquad 
\ovc_{ep}[k]=\ovc_{es}[k]-C[k]\left( \frac{L_p}{3D_{ep}}+\frac{L_s}{2D_{es}}\right ).
\end{equation}
and the concentrations at the interfaces {(see \eqref{eq::electrolyte_bound})
\begin{align*} 
c_{en}[k](x_{cn})=\ovc_{en}[k] +C[k]\frac{L_n}{6D_{en}},&\quad 
c_{es}[k](x_{en})=\ovc_{es}[k] +C[k]\frac{L_s}{2D_{es}},\\
c_{es}[k](x_{ep})=\ovc_{es}[k] -C[k]\frac{L_s}{2D_{es}},&\quad
c_{ep}[k](x_{cp})=\ovc_{ep}[k] -C[k]\frac{L_p}{6D_{ep}}.
\end{align*}

\subsubsection{A major issue: the Diffusion-Limited Current Density}
During a charge or a discharge, and after a short transition, we have a steady-state situation for the ion \ce{Li^+} parameterized by time, {\it i.e.} there is no time derivative (see \eqref{eq::conservation_Lin_steady}, \eqref{eq::conservation_Lip_steady}, \eqref{eq::conservation_Lis_steady}). Therefore, the cation concentration is a decreasing function in space for a discharge and an increasing function in space for the charge; thus, the concentration reaches the extrema at $x_{cn}$ and $x_{cp}$, given by the values $\hatc_{en,c}(I)$ and $\hatc_{ep,c}(I)$, respectively (see \eqref{eq::constant_concentration_hat_defn}) as a function of intensity. 

From the physical point of view, negative concentration is not admissible; we conclude that the intensity has to range in an interval such that both concentrations are non-negative, the null case being the limit leading to the diffusion-limited current density \cite{NT04}. Two situations arise.
\begin{itemize}
\item Discharge case (positive intensity): The function  $\hatc_{en,c}(I)$ (see \eqref{eq::constant_concentration_hat}) is a decreasing function in relation to the intensity; therefore, there is an upper limit $I_{e}^+\geq 0$ such that $\hatc_{en,c}(I_{e}^+)=0$. Any intensity $I>I_{e}^+$ is not eligible; otherwise, we have a negative concentration at the cathode collector.
\item Charge case (negative intensity): The function  $\hatc_{ep,c}(I)$ (see \eqref{eq::constant_concentration_hat}) is an increasing function in relation to the intensity; therefore, there is a lower limit $I_{e}^-\leq 0$ such that $\hatc_{en,c}(I_{e}^-)=0$. Any intensity $I<I_{e}^-$ is not eligible; otherwise, we have a negative concentration at the anode collector. 
\end{itemize}
Consequently, during the charge or discharge stage,  ion diffusion is a limiting factor that yields the condition $I\in[I_{e}^-,I_{e}^+]$. We give an easy way to compute the approximation of the diffusion-limited current density.

As noted in remark \ref{rem_I-linearity-boundary},  $\hatc_{en,c}(I)$ and $\hatc_{en,c}(I)$ are affine functions of the intensity (see relations \eqref{eq::constant_concentration_hat}), enabling computation of the two bounds $I_{e}^-,I_{e}^+$. 
When $I=0$, the solution is the initial concentration $c_e^0$ and we have $\hatc_{en,c}(0)=\hatc_{ep,c}(0)=c_e^0$. Taking arbitrarily $I=1$, we compute the values $\hatc_{en,c}(1)$ and $\hatc_{ep,c}(1)$ by solving the system \eqref{eq::average_concentration_0D_hat} with $I=1$ and the relations \eqref{eq::constant_concentration_hat}. Taking advantage that the function is affine, we deduce that the left and right concentrations at the collectors go to zero for 
$$
I_e^-=\frac{c_e^0}{c_e^0-\hatc^1_{en,c}}<0,\qquad I_e^+=\frac{c_e^0}{c_e^0-\hatc^1_{ep,c}}>0,
$$ 
respectively.

\begin{defn}
$I_{e}^+$ and $I_{e}^-$ are, respectively, the highest and lowest diffusion-limited current densities. They represent the bounds to which the intensity $I$ must belong to guarantee the physical eligibility of the concentration. Their values are closely connected to the diffusion coefficients $D_{ek}, k \in \{n,s,p\}$ and assess the capacity to transport the cation from one electrode to another. 
Such intensities are a strong limitation on the time required for charging and discharging the cell since they represent a physical limit to the intensity. \qedsymbol
\end{defn}
For example, using the data of Table \ref{table::parameters}, we compute $I_{e}^+=7.0\,mA$ and $I_{e}^-=-7.7\,mA$.
}

\subsection{Numerical solution for the potential}\label{sec:numerical_potential_problem}
Assume that the average concentrations $\ovc_{n}[k]$, $\ovc_{p}[k]$, $\ovc_{en}[k]$, $\ovc_{es}[k]$, $\ovc_{ep}[k]$ together with the positive ionic concentrations at the positions $x_{cn}$, $x_{en}$, $x_{ep}$, $x_{cp}$ are given at time $t^k$. We want to determine the external intensity $I[k]$ and the other electric quantities by solving the steady-state problem under the constraint of the target intensity $I_0[k]=I_0(t^k)$ and the external electromotive potential $U_{ext}[k]=U_{ext}(t^k)$. 

The preliminary stage consists of computing $\iota_n[k]=\iota_n\big (\ovc_n[k],\ovc_{en}[k]\big )$, $\iota_p[k]=\iota_p\big (\ovc_p[k],\ovc_{ep}[k]\big )$, together with \eqref{eq::exact_DELTA_c},
$$
\Delta_c[k]=\frac{2t^+-1}{2f}
\ln \left ( \frac{c_{ep}[k](x_{cp})\,c_{es}[k](x_{ep})}{c_{es}[k](x_{en})\,c_{en}[k](x_{cn})}\right ).
$$
Moreover, the residual potential  is denoted by $V_{res}[k](I,R_{ext})$ as it depends on average concentrations through $\iota_n[k]$, $\iota_p[k]$ and $\Delta_c[k]$.

In the second stage, we compute $V_{00}[k]=V_{res}[k](0,0)$ which corresponds to the open-circuit configuration, given the external potential $U_{ext}(t^k)$. 
\begin{enumerate}
\item If $V_{00}[k]<0$ (charging), then by proposition \ref{prop:existence_uniqueness}, there exists a unique solution $I_{Max}[k]<0$ such that $V_{res}[k]( I_{Max}[k],0)=0$ we determine numerically. Two possibilities arise:
    \begin{enumerate}
    \item If $I_{Max}[k]\leq I_0[k]<0$, set the resistance $\displaystyle R_{ext}[k]=\frac{V_{res}[k](I_0[k],0)}{I_0[k]}$ and $I[k]=I_0[k]$.
    \item On the other hand, we have $I_0[k]<I_{Max}[k]<0$, the external resistance $R_{ext}[k]$ is set to zero and we cannot reach the target intensity. The intensity is then $I[k]=I_{Max}[k]$.
    \end{enumerate}
\item If  $V_{00}[k]>0$ (discharging), proposition \ref{prop:existence_uniqueness} implies that there is a unique $I_{Max}[k]>0$ such that $V_{res}[k](I_{Max}[k],0)=0$.
    \begin{enumerate}
    \item If  $0<I_0[k]<I_{Max}[k]$, we take $\displaystyle R_{ext}[k]=\frac{V_{res}[k](I_0[k],0)}{I_0}$ and set $I[k]=I_0[k]$.
    \item Otherwise, we cannot reach the target intensity and set $R_{ext}[k]=0$ and $I[k]=I_{Max}[k]$.
    \end{enumerate}
\end{enumerate}
We propose two numerical examples computed using the data in Table \ref{table::parameters}. 
\begin{enumerate}
\item For the discharge case, we assume that the external potential is null ($U_{ext}=0$).
\begin{itemize}
\item Considering the target intensity $I_0=50\,mA$, we reach it with $R_{ext}=64.6\,\Omega$ and the potential drop $\Phi_p-\Phi_n$ of the cell is $3.24\,V$.
\item Considering the target intensity $I_0=600\,mA$, we cannot reach it as the maximum $I_{Max }$ is $565\,mA$. The external resistance is $R_{ext}=0$ and the potential drop $\Phi_p-\Phi_n$ is almost equal to $0$ (short circuit).
\end{itemize}
\item For the charging case, we assume that the external potential is $U_{ext}=4.3\,V$.
\begin{itemize}
\item Considering the target intensity $I_0=-30\,mA$, we reach it with  external resistance $R_{ext}=3.57\,\Omega$ while the potential drop  $\Phi_p-\Phi_n$ is $4.19\, V$.
\item Considering the target intensity $I_0=-50\,mA$, we cannot reach it, as we have a maximum $I_{Max }$ equal to $-42.5\,mA$ and a null external resistance ($R_{ext}=0$). The potential drop  $\Phi_p-\Phi_n$ is precisely $4.3\,V$, equal to the assumed counter-electromotive potential $U_{ext}$.
\end{itemize}
\end{enumerate}

\subsection{Numerical solution for the complete model}\label{sec:numerical_potential_problem_complete_model}
Concentrations and potentials are now fully coupled through intensity $I$ as the leading variable. Now, assuming that all data have been computed at the time $t^{k-1}$, we seek the solution at the time $t^k$.

The problem is not a simple combination of the two previous problems, since several issues may arise, namely negative concentrations if the intensity is too high. We gather the whole implicit problem into two subgroups.
\begin{eqnarray*}
&& \textrm{\bf Group (1): the concentrations}\\
\ovc_{n}[k]&=&\ovc_{n}[k-1]-\Delta t\, \frac{I[k]}{A\,F\varepsilon_n \,L_n},\\
\ovc_{p}[k]&=&\ovc_{p}[k-1]+\Delta t\, \frac{I[k]}{A\,F\varepsilon_p \,L_p},\\
(\varepsilon_n L_n +\varepsilon_s L_s +\varepsilon_p L_p)\,\ovc_{es}[k]&=&M^0+\frac{(1-t^+)}{AF}I[k]\left [ 
\varepsilon_p L_p\left ( \frac{L_p}{3D_{ep}}+\frac{L_s}{2D_{es}} \right ) 
-\varepsilon_n L_n\left ( \frac{L_n}{3D_{en}}+\frac{L_s}{2D_{es}} \right )
\right ],\\
\ovc_{en}[k]&=&\ovc_{es}[k]+\frac{(1-t^+)}{AF}I[k]\left( \frac{L_n}{3D_{en}}+\frac{L_s}{2D_{es}}\right ),\\
\ovc_{ep}[k]&=&\ovc_{es}[k]-\frac{(1-t^+)}{AF}I[k]\left( \frac{L_p}{3D_{ep}}+\frac{L_s}{2D_{es}}\right ),\\
c_{en}[k](x_{cn})&=&\ovc_{en}[k] +\frac{(1-t^+)L_n}{6\,A\,F\,D_{en}}I[k],\\
c_{es}[k](x_{en})&=&\ovc_{es}[k] +\frac{(1-t^+)L_s}{2\,A\,F\,D_{es}}I[k],\\
c_{es}[k](x_{ep})&=&\ovc_{es}[k] -\frac{(1-t^+)L_s}{2\,A\,F\,D_{es}}I[k],\\
c_{ep}[k](x_{cp})&=&\ovc_{ep}[k] -\frac{(1-t^+)L_p}{6\,A\,F\,D_{ep}}I[k],
\end{eqnarray*}
\begin{eqnarray*}
&& \textrm{\bf Group (2): the potentials and intensity}\\
\Delta_c[k]&=&\frac{2t^+-1}{2f}
\ln \left ( \frac{c_{ep}[k](x_{cp})\,c_{es}[k](x_{ep})}{c_{es}[k](x_{en})\,c_{en}[k](x_{cn})}\right ),\\
\iota_n[k]&=&2F\,A\, L_n\,a_n\,r_n\, \Big (\ovc_{en}[k]\Big )^{\alpha_n} \Big (\ovc_n[k]\,(c^{max}_n-\ovc_n[k])\Big )^{1-\alpha_n},\\ 
\iota_p[k]&=&2F\,A\, L_p\,a_p\,r_p\, \Big (\ovc_{ep}[k]\Big )^{\alpha_p} \Big (\ovc_p[k]\,(c^{max}_p-\ovc_p[k])\Big )^{1-\alpha_p},\\
I[k]&=&+\iota_n[k]\ \bsinh\big (f\eta_{en}[k]/2\big ),\\
I[k]&=&-\iota_p[k]\ \bsinh\big (f\eta_{ep}[k]/2\big ), \\
V_{res}[k]&=&U_{ep}-U_{en}-U_{ext}[k]+\eta_{ep}[k]-\eta_{en}[k]-\Delta_{c}[k]-(R_{ext}[k]+R_{int})I[k], 
\end{eqnarray*}
where $R_{ext}[k]$ is a free parameter of the system. The objective is to determine an eligible intensity $I[k]$ as close as possible to the target intensity $I_0[k]$ such that $V_{res}[k]=0$.

\subsubsection{Preliminaries}
In the first step, we determine whether we have a configuration for charging or discharging the cell at time $t^{k}$ as a function of the external potential $U_{ext}[k]$ applied to the circuit. Following the methodology proposed at the beginning of subsection \ref{sec:numerical_potential_problem}, we test the model with $I=0$, and from the equations of Group (1), we immediately deduce that $\ovc_{n}[k]=\ovc_{n}[k-1]$, $\ovc_{p}[k]=\ovc_{p}[k-1]$, while the ion concentration is constant in space. On the other hand, from Group (2), we have $\Delta_c=0$ and $\eta_{en}[k]=0$, $\eta_{ep}[k]=0$. Finally, we compute the residual potential $V_{00}[k]$ and now the rule is
\begin{itemize}
\item If $V_{00}[k]<0$, we are dealing with a charge configuration and $I[k]$ is non-positive,
\item If $V_{00}[k]>0$, we are dealing with a discharge configuration, and $I[k]$ is non-negative.
\end{itemize}
However, the diffusion-limited current densities are independent of time step and depend only on the initial concentration of \ce{Li^+} and the diffusion coefficients.  We then have the constraints
\begin{itemize}
\item charge case: eligible intensity has to satisfy $I[k]\in [I_e^-,0]$;
\item discharge case: we must have $I[k]\in [0,I_e^+]$.
\end{itemize}

\subsubsection{Case $I_0[k]\in ]I_e^-,I_e^+[$} If $I_0[k]$ is eligible we compute Group (1) with $I_0[k]$ and then compute $V_0[k]=V_{res}[k](I_0[k],0)$.
\begin{itemize}
\item If $V_0[k]$ and $V_{00}[k]$ have the same sign, then we reach the target intensity $I[k]=I_0[k]$ and define the external resistance $\displaystyle R_{ext}[k]=\frac{V_0[k]}{I_0[k]}$;
\item Otherwise, $V_0[k]$ and $V_{00}[k]$ have opposite signs, and we cannot reach the target values. The proposition \ref{prop:existence_uniqueness} states that there exists a unique intensity $I_{Max}[k]$ such that $V_{res}[k](I_{Max}[k],0)=0$. We then take $I[k]=I_{Max}[k]$ and $R_{ext}[k]=0$. Note that $I_{Max}[k]$ is also eligible since $I_0[k]$ is assumed admissible.
\end{itemize}
\begin{remark}
Note that taking $I[k]=I_e^+$ or $I[k]=I_e^-$ is impossible, since we get a null concentration. Consequently, $\Delta_c[k]$ is not defined (the logarithmic function $\ln$ requires the argument to belong to $]0,+\infty[$). \qedsymbol
\end{remark}
\subsubsection{Case $I_0[k]\notin ]I_e^-,I_e^+[$}
This is a difficult situation because the target intensity $I_0[k]$ is too high and causes negative concentrations. We have to find a lower intensity $I[k]\in [I_e^-,I_e^+]$ and an external resistance $R_{ext}[k]$ that preserve the electrical balance, {\it i.e.} $V_{res}[k]=0$, together with non-negative concentrations. 

We first treat the discharge case, that is, we have $V_{00}[k]>0$, $I_0[k]\geq I_e^+>0$. To achieve an approximation of $I[k]$ and $R_{ext}[k]$, we propose a dichotomy-like method that provides a bounded interval of the intensity. We detail hereafter the method and skip index $k$ for the sake of readability. We create a sequence of intervals $\big [I_a^\ell,I_b^\ell\big ]$ that we initialize with $I_a^0=0$ and $I_b^0=I_e^+$. The algorithm is as follows:
\begin{enumerate}
\item Take $\displaystyle I_c^\ell=\frac{I_a^\ell+I_b^\ell}{2}$.
\item Compute the associated concentrations (they are all strictly positive since $I_c^\ell<I_e^+$).
\item Compute $\Delta^\ell_c$, $\iota_n^\ell$, $\iota_p^\ell$ and then deduce $\eta_{en}^\ell$, $\eta_{ep}^\ell$.
\item Compute $V_{0}^\ell=V_{res}^\ell(I_c^\ell,0)$. 
\item If $V_{0}^\ell>0$ (the intensity is too low) we take $I_a^{\ell+1}=I_c^{\ell}$ and $I_b^{\ell+1}=I_b^\ell$, otherwise (intensity is too high) we take $I_b^{\ell+1}=I_c^{\ell}$ and $I_a^{\ell+1}=I_a^\ell$. 
\item Repeat until $|V_{res}^\ell|\leq \texttt{tol}$ and $|I_a^{\ell+1}-I_b^{\ell+1}|\leq \texttt{tol}$ where \texttt{tol} is a given tolerance.
\end{enumerate}
At the end of the algorithm, we take the intensity $I[k]=I_c^\ell<I_e^+$ that guarantees all positive concentrations and $V_{res}(I_c^\ell,0)\approx 0$. Consequently, we take $R_{ext}[k]=0$ and $I[k]=I_c^\ell$.

For the charge case, the algorithm is very similar by taking $I_a^0=I_e^-$ and $I_b^0=0$. We reach an approximation $I_e^-<I[k]=I_c^\ell<0$ such that $V_{res}(I_c^\ell,0)\approx 0$ and $R_{ext}[k]=0$.

\section{Simulations}
A battery composed of an anode and a cathode with constant open-circuit potential (no intercalation versus plating is considered in the example) separated by a porous medium is analysed with the numerical simulator. Data are given in Table \ref{table::parameters}. 
\begin{table}[ht]
\small
\renewcommand{\arraystretch}{1.4}
\begin{tabular}{| l |l| l |}
\hline
parameter  & dimension& name\\
\hline\hline
$A=10^{-4}$                                      &$[m^2]$        & cross area of the cell   \\
$L_n=80.\,10^{-6}$, $L_s=25.\,10^{-6}$, $L_p=100.\,10^{-6}$&$[m]$          & length of the domains\\
$a_{n}=885000$,\hskip 1em $a_{p}=723000$       &$[m^{-1}]$     &  specific interfacial area of the electrodes\\
$\varepsilon_n=0.36$,\hskip 1em  $\varepsilon_s=0.30$, \hskip 1em $\varepsilon_p=0.36$        &$[m^3\,m^{-3}]$&  porosity\\
\hline
$F=96487$                                      &$[C\,mol^{-1}]$&  Faraday constant \\
$f=\frac{F}{RT}=38.68$                         &$[C\, J^{-1}]$ &  inverse thermal potential   \\
$t^+=0.364$                                    &$[-]$          &  transfer number\\
$\sigma_{n}=100$, \hskip 1em $\sigma_{p}=0.5$   &$[S\,m^{-1}]$  & effective electronic conductivity in electrode\\
$\alpha_n=0.5$,\hskip 1em  $\alpha_p=0.5$      &$[-]$          & electrodes transfer coefficients \\
$D_{en}=1.65\,10^{-11}$, $D_{es}=2.0\,10^{-10}$, $D_{ep}=4.15\,10^{-11}$   &$[m^2 s^{-1}]$& cation diffusivity \\
$\sigma_{en}=0.1$,\hskip 1em  $\sigma_{es}=0.1$,\hskip 1em $\sigma_{ep}=0.1$                     &$[S\,m^{-1}]$ & effective ionic conductivity \\
\hline
$c_{n}^{max}=50000$,\hskip 1em  $c_{p}^{max}=30000$  &$[mol\,m^{-3}]$& maximum mole concentration in the electrodes\\
$r_n=2.7\,10^{-11}$,\hskip 1em $r_p= 1.8\,10^{-12}$  &$[m^{5/2} mol^{-1/2} s^{-1}]$ & electrochemical reaction rate constant\\
$c_{en}^{0*}=10^3$,\hskip 1em  $c_{ep}^{0*}=10^3$         &$[mol\,m^{-3}]$& reference cation mole concentration \\
$c_{n}^{*}=25000$, \hskip 1em $c_{p}^{*}=15000$      &$[mol\,m^{-3}]$& reference reduced material\\
$i_{en}^{0*}=2.06$,\hskip 1em  $i_{ep}^{0*}=0.082$       &$[A\,m^{-2}]$  & reference current surface density \\ 
$j_{en}^{0*}=1.82\,10^6$,\hskip 1em  $j_{ep}^{0*}=5.96\,10^{4}$       &$[A\,m^{-3}]$  & reference current volume density \\ 
$U_{en}=0.0$,\hskip 1em  $U_{ep}=3.8$        &$[V]$          & open circuit potential of anode and cathode\\
\hline
\end{tabular}
\caption{Parameters for the benchmarking}\label{table::parameters}
\end{table}

We consider three different situations by modifying the dielectric diffusion coefficients.
\begin{itemize}
\item Experiment ONE is the reference case using the dielectric diffusion coefficients given in Table \ref{table::parameters}. 
\item Experiment HALF consists of using the same diffusion coefficients but divided by two (low diffusion case). 
\item Experiment THREE uses three times the values of the diffusion coefficients (high diffusion case). 
\end{itemize}
The purpose of the simulation is to show the impact of dielectric diffusion in the charging and discharging processes. More specifically, we highlight the current limitation due to diffusion of the cations in the cell.

In Figure \ref{fig:external_condition}, we plot the target intensity together with the external potential that we would like to achieve during the 3600 s simulation. Note that the $1C$ rate configuration of a total charge of $4.3418$ Coulomb corresponds to the constant intensity $I=1.2\,mA$, required to transfer the whole charge in one hour. The scenario is as follows ($I_0$ being the target intensity):
\begin{itemize}
\item $t\in [0,900]$, discharge $5C$ rate, $(I_0=6\,mA)$ and $U_{ext}=0$, 
\item $t\in[900,1200]$, charge  $5C$ rate, $(I_0=-6\,mA )$ and $U_{ext}=4.2$,
\item $t\in [1200,1500]$, open circuit and cell at rest, 
\item $t\in [1500,1800]$, discharge  $15C$ rate, $(I_0=18mA)$ and $U_{ext}=0$,
\item $t\in[1800,2400]$, charge $15C$ rate, $(I_0=-18mA)$ and $U_{ext}=4.5$,
\item $t\in [2400,3600]$, discharge with $5C$ rate, $(I_0=6\,mA)$ and $U_{ext}=0$,
\end{itemize}
\begin{figure}[ht]
    \centering
    \includegraphics[width=0.75\linewidth]{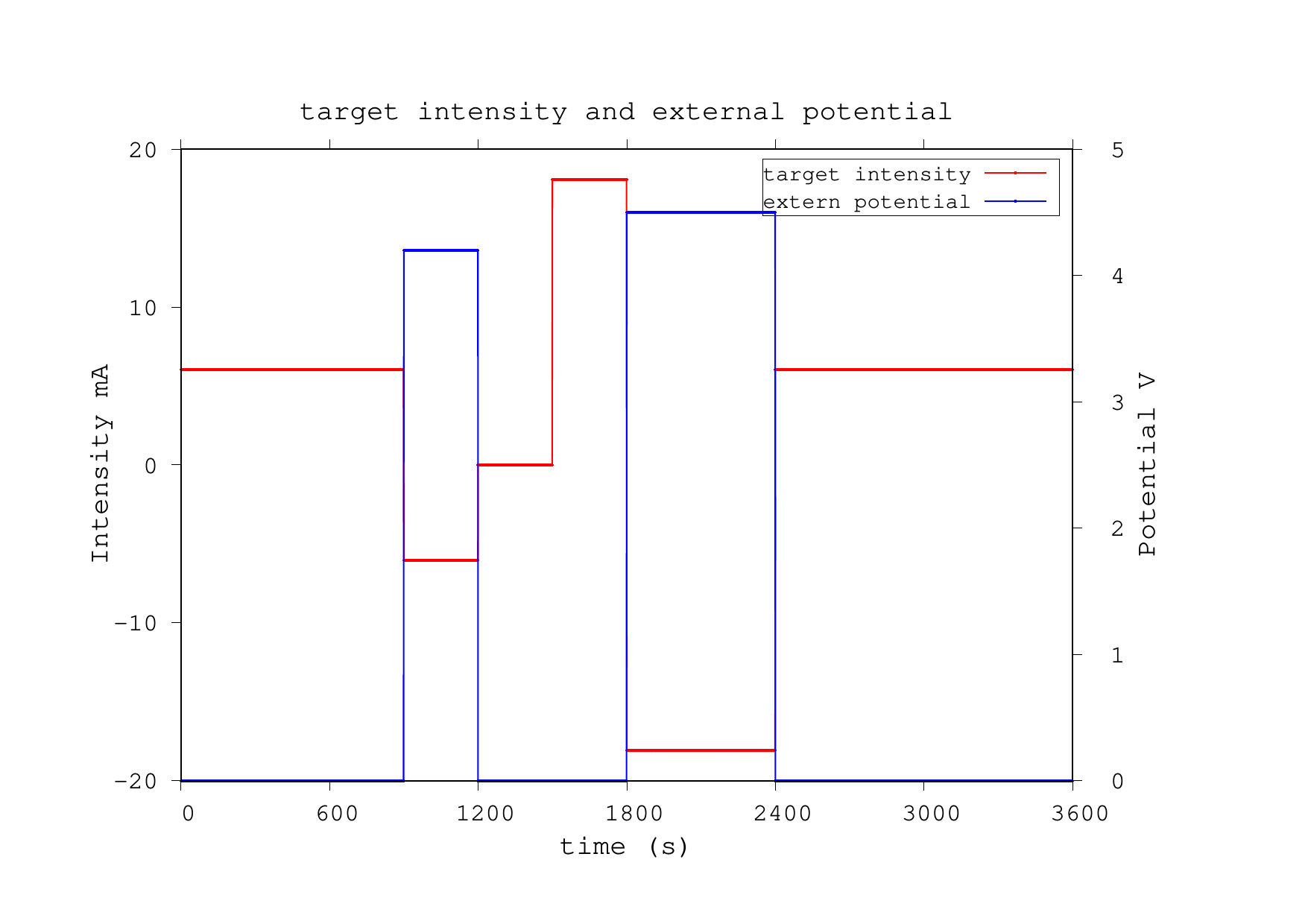}
    \caption{Target intensity and external potential for the numerical simulation as a function of time}
    \label{fig:external_condition}
\end{figure}

In Figure \ref{fig:Intensity_voltage}, we plot the intensity across the cell together with the $I_e^+$ (red line) and $I_e^-$ (blue line) during the simulation for the three scenarios. We observe that the intensity strictly remains between the two constraints. For high diffusion coefficients (left panel), the cell intensity is not subject to the non-negative concentration constraints. Moreover, in some situations,  we do not reach the target intensity, as shown by the zero external resistance in Figure \ref{fig:potential_and_external_resistance} (left panel).

A lower dielectric diffusion coefficient makes the intensity eligibility more restrictive, and we observe in scenario ONE that a 15C intensity is not possible due to the concentration constraint resulting from the dielectric diffusion coefficients. The scenario HALF confirms the deep impact of the concentrations' positivity (and consequently, of the diffusion coefficients). Most of the time, the intensity is bounded by the diffusion-limited intensity, and the cell is not able to deliver a high current.   

\begin{figure}[ht]
    \centering
    \includegraphics[width=0.33\linewidth]{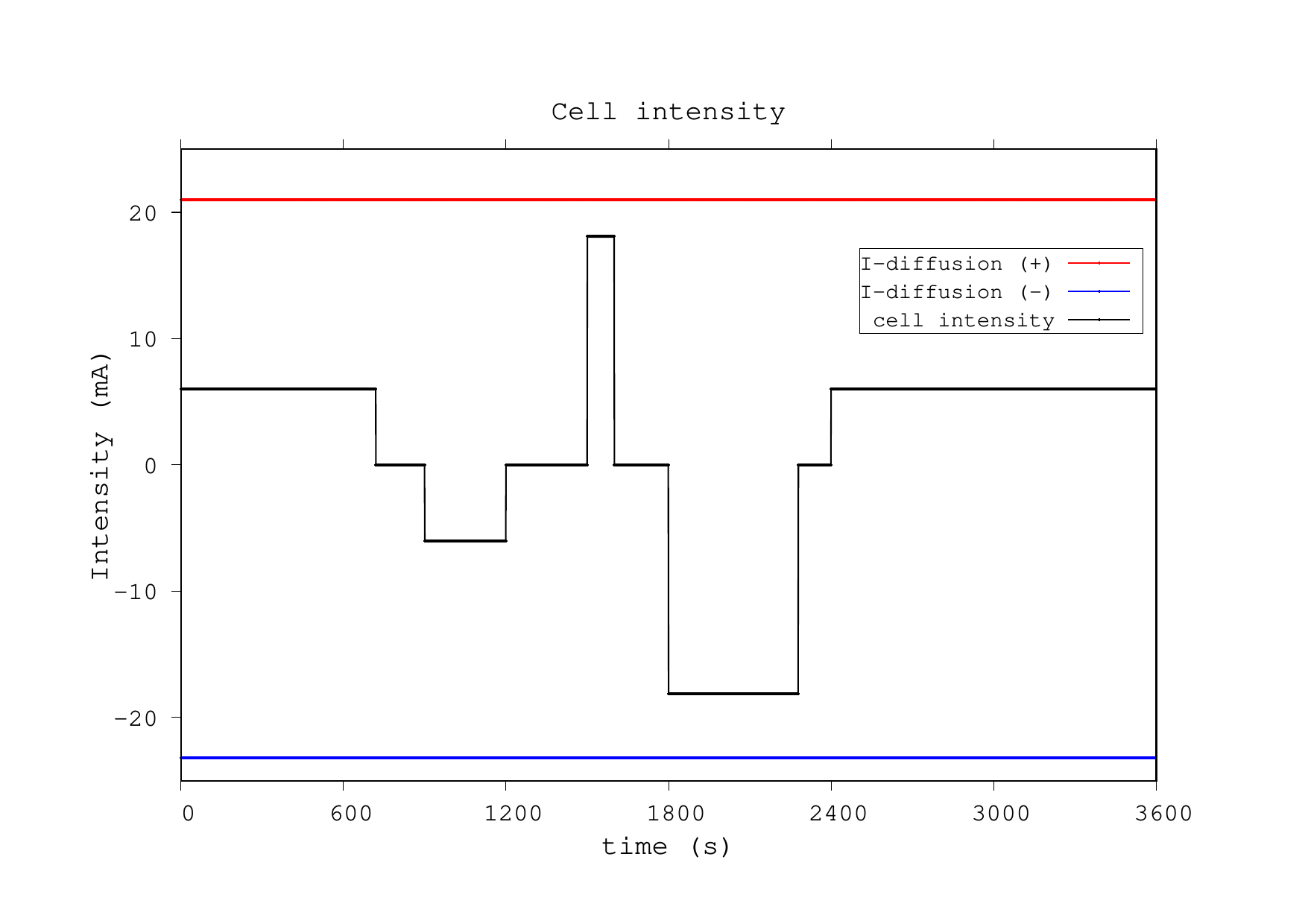}
    \includegraphics[width=0.33\linewidth]{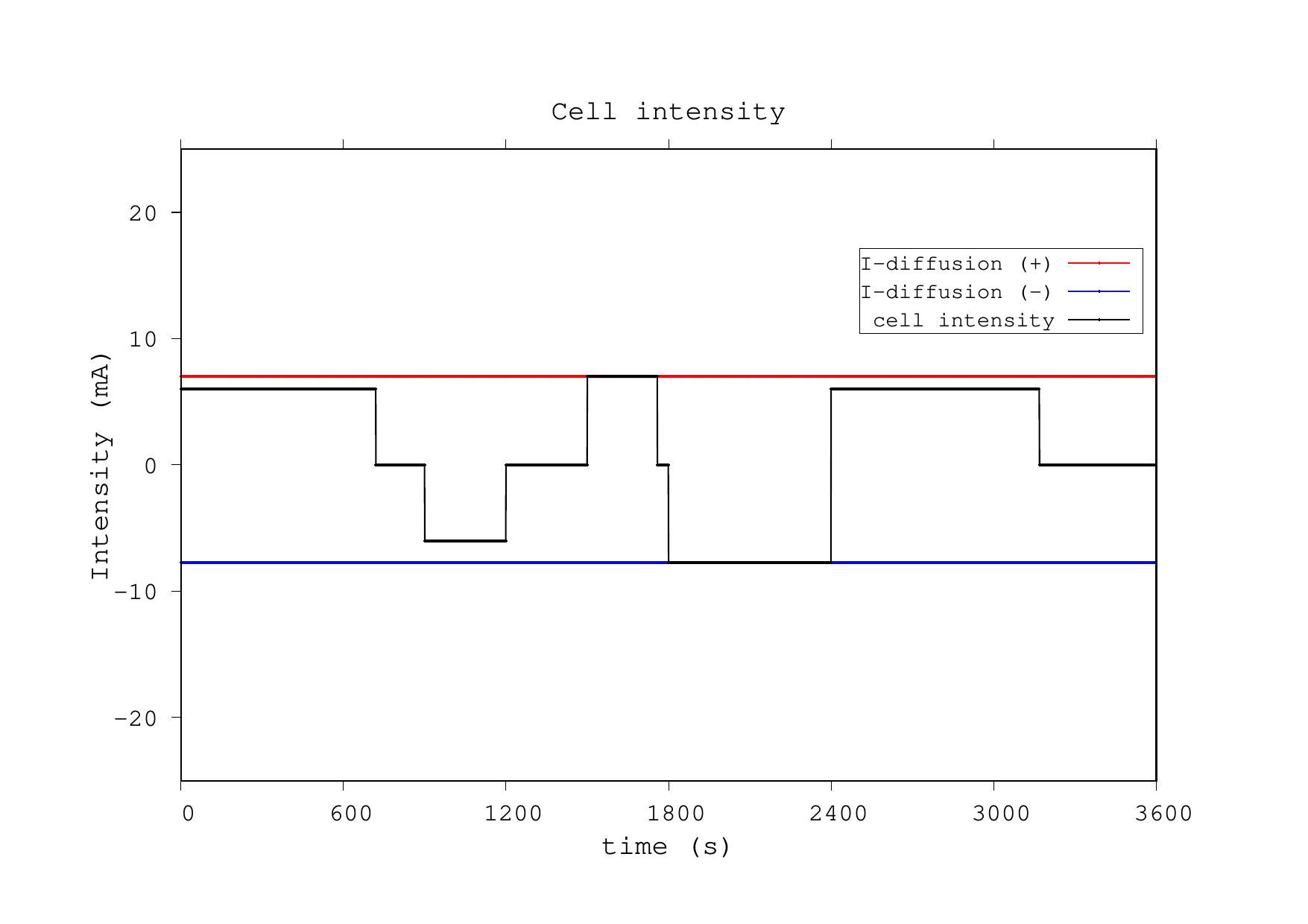}
    \includegraphics[width=0.33\linewidth]{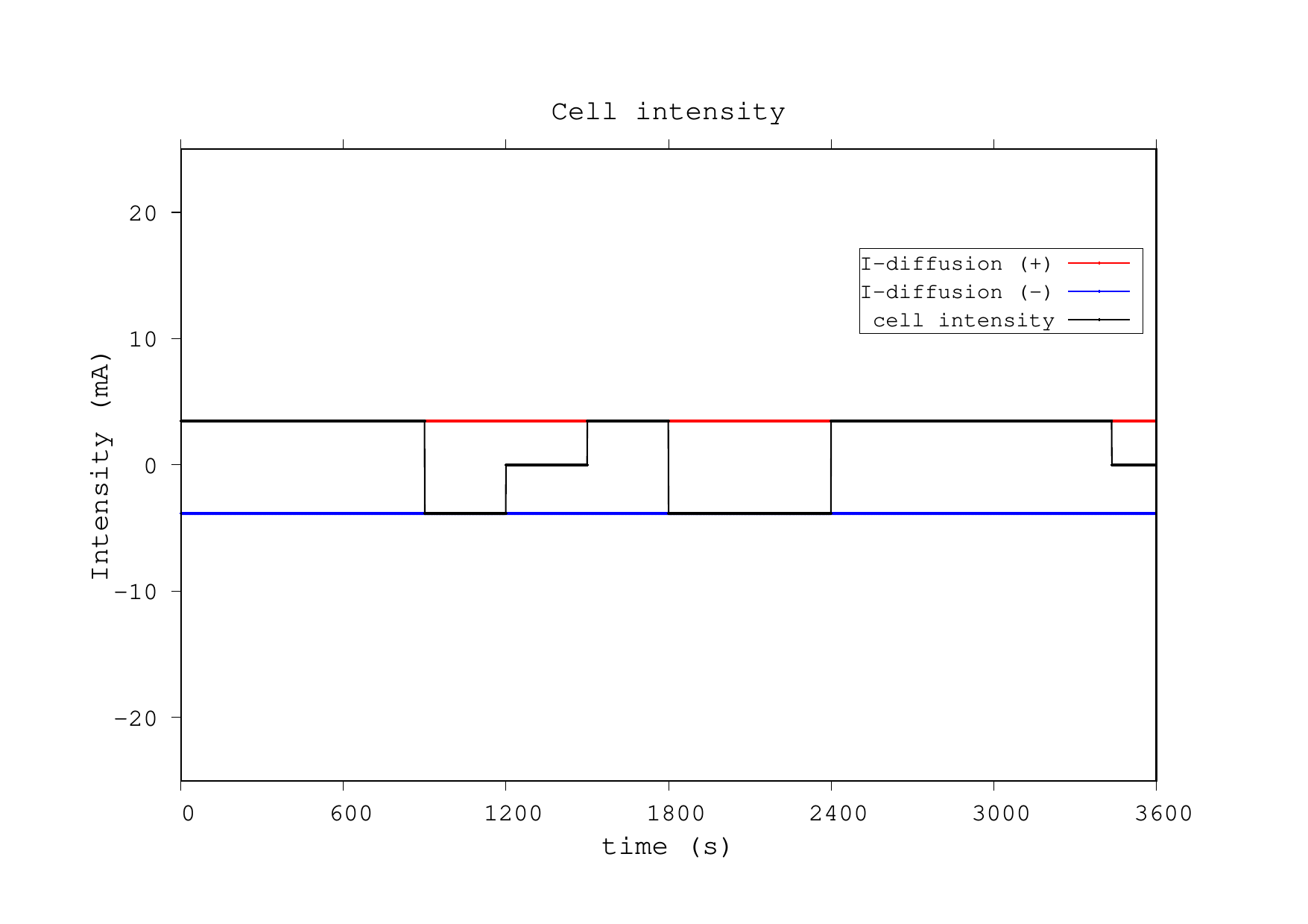}
    \caption{\captionpolice Cell intensity together with the upper and lower diffusion-limited intensity. Scenario THREE (left panel), scenario ONE (middle panel), and scenario HALF (right panel)}
    \label{fig:Intensity_voltage}
\end{figure}

As a direct consequence of the intensity, the mass transfer between the electrodes is strongly limited when dealing with a low diffusion coefficient, as shown in Figure \ref{fig:electrode_concentration}. We managed to reach full depletion of the anode for scenarios ONE and THREE with a 5C intensity, when $t \in [900, 1200]$, but the scenario HALF limits the maximum intensity to around a 4C discharge and keeps the cell from transferring the whole charge in less than 15 minutes.

\begin{figure}[ht]
    \centering
    \includegraphics[width=0.33\linewidth]{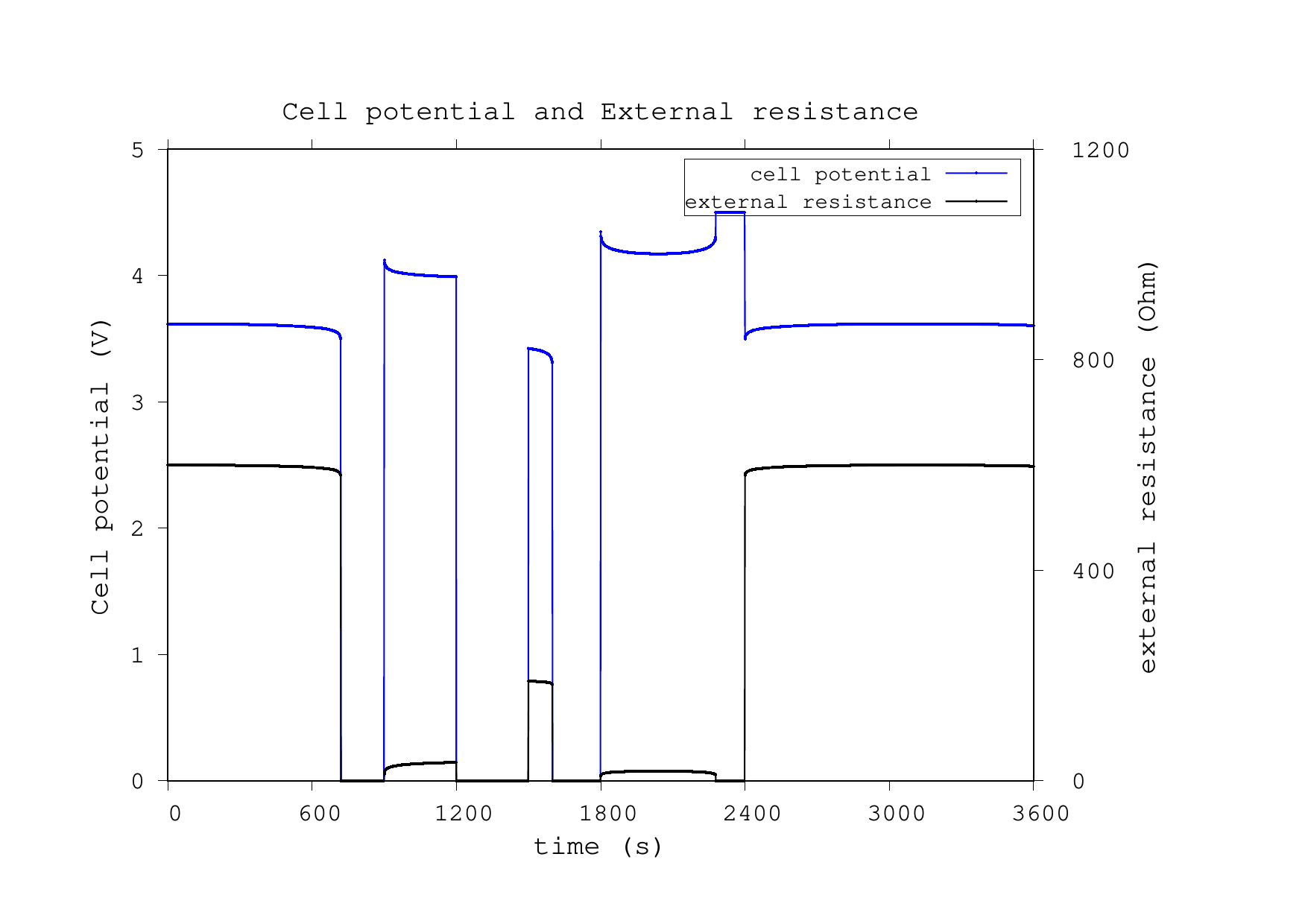}
    \includegraphics[width=0.33\linewidth]{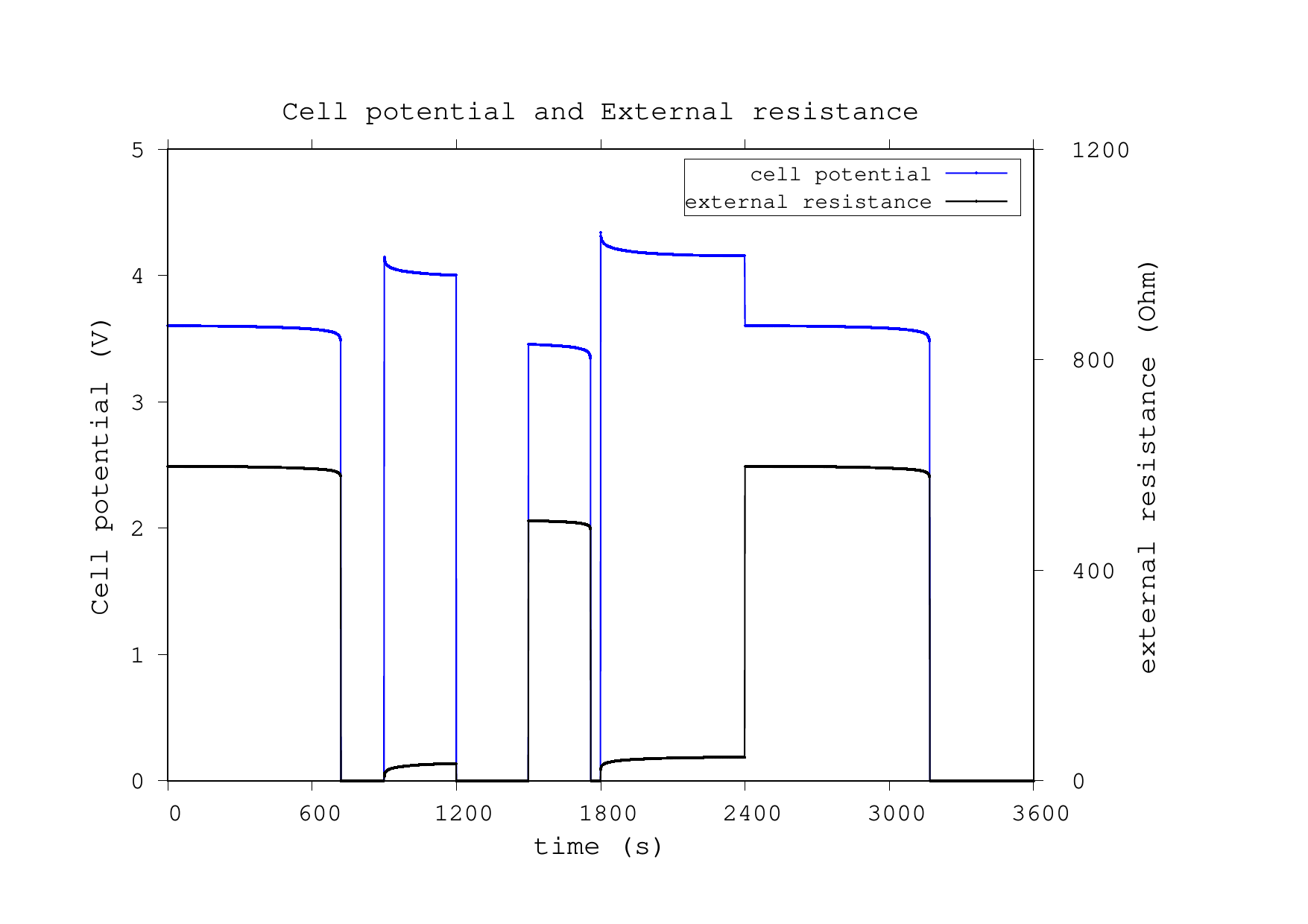}
    \includegraphics[width=0.33\linewidth]{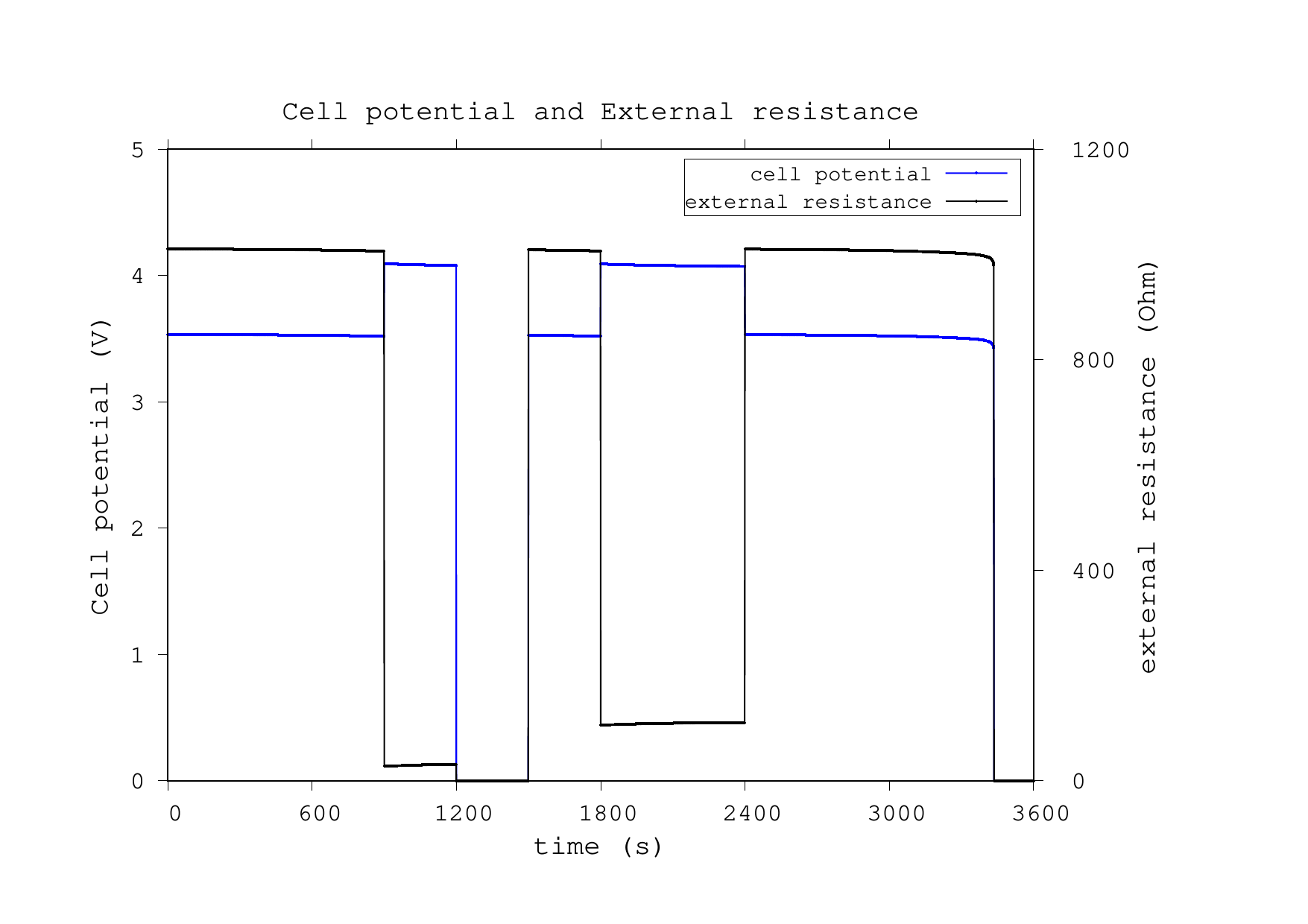}
    \caption{Cell potential and the associated external resistance. Scenario THREE (left panel), scenario ONE (middle panel) and scenario HALF (right panel) }
    \label{fig:potential_and_external_resistance}
\end{figure}

To highlight the positive concentration issue, we plot the concentration at the collector interfaces in Figure \ref{fig:collector_concentration}. We recall that for $I=I_e^-$, the concentration in the anode collector is almost null (charging case), whereas the concentration in the cathode collector is close to zero when $I=I_e^+$ (discharging case). For high diffusion coefficients, positivity does not lead to a limitation of the current (left panel), but a 15C charge, when $t \in [1800, 2400]$, is not achievable for scenario ONE (middle panel). Similarly, the current corresponding to the 5C charge, when $t \in [900, 1200]$, is impossible to achieve with the diffusion coefficients used in scenario HALF (right panel).  

\begin{figure}[ht]
    \centering
    \includegraphics[width=0.33\linewidth]{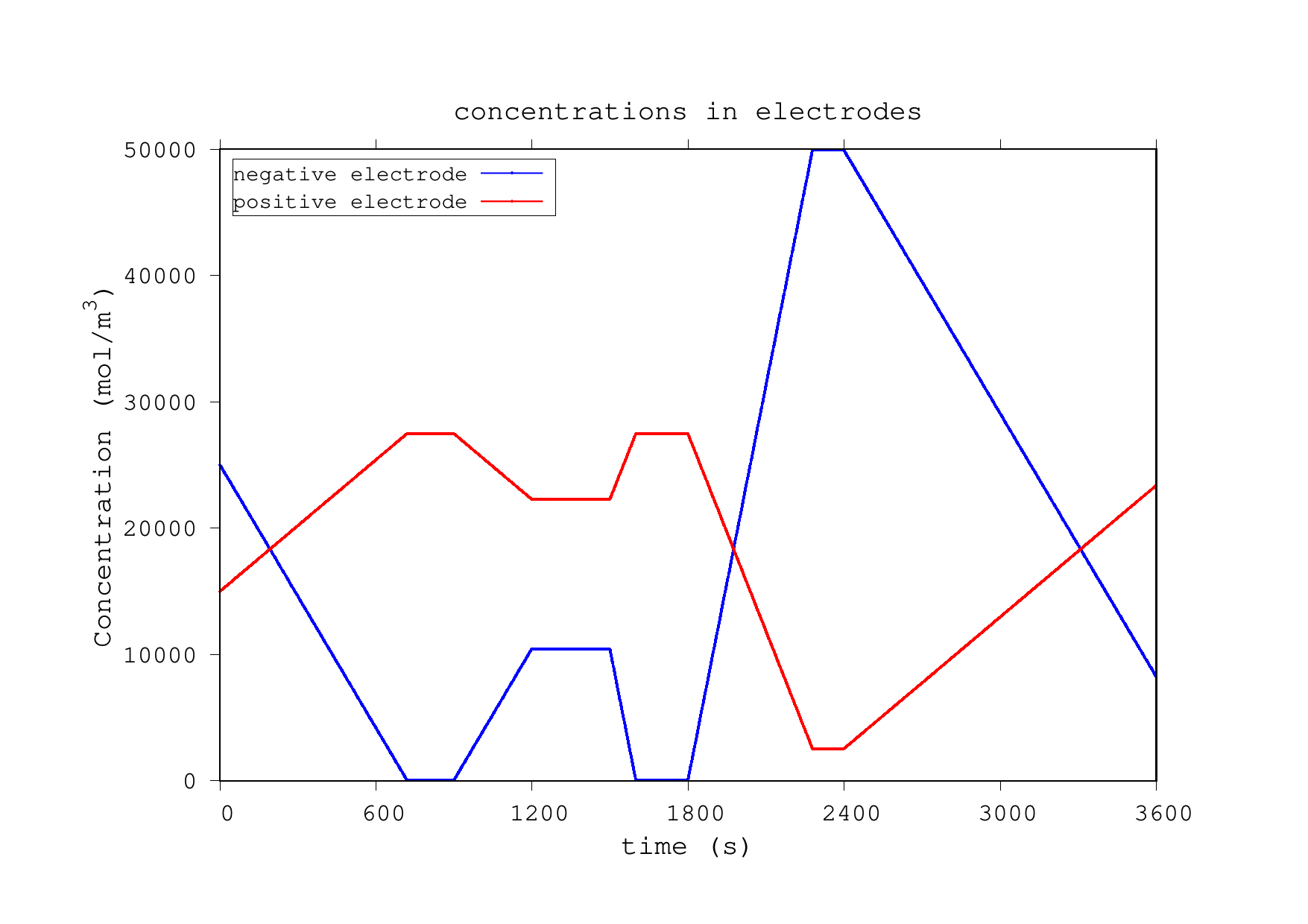}
    \includegraphics[width=0.33\linewidth]{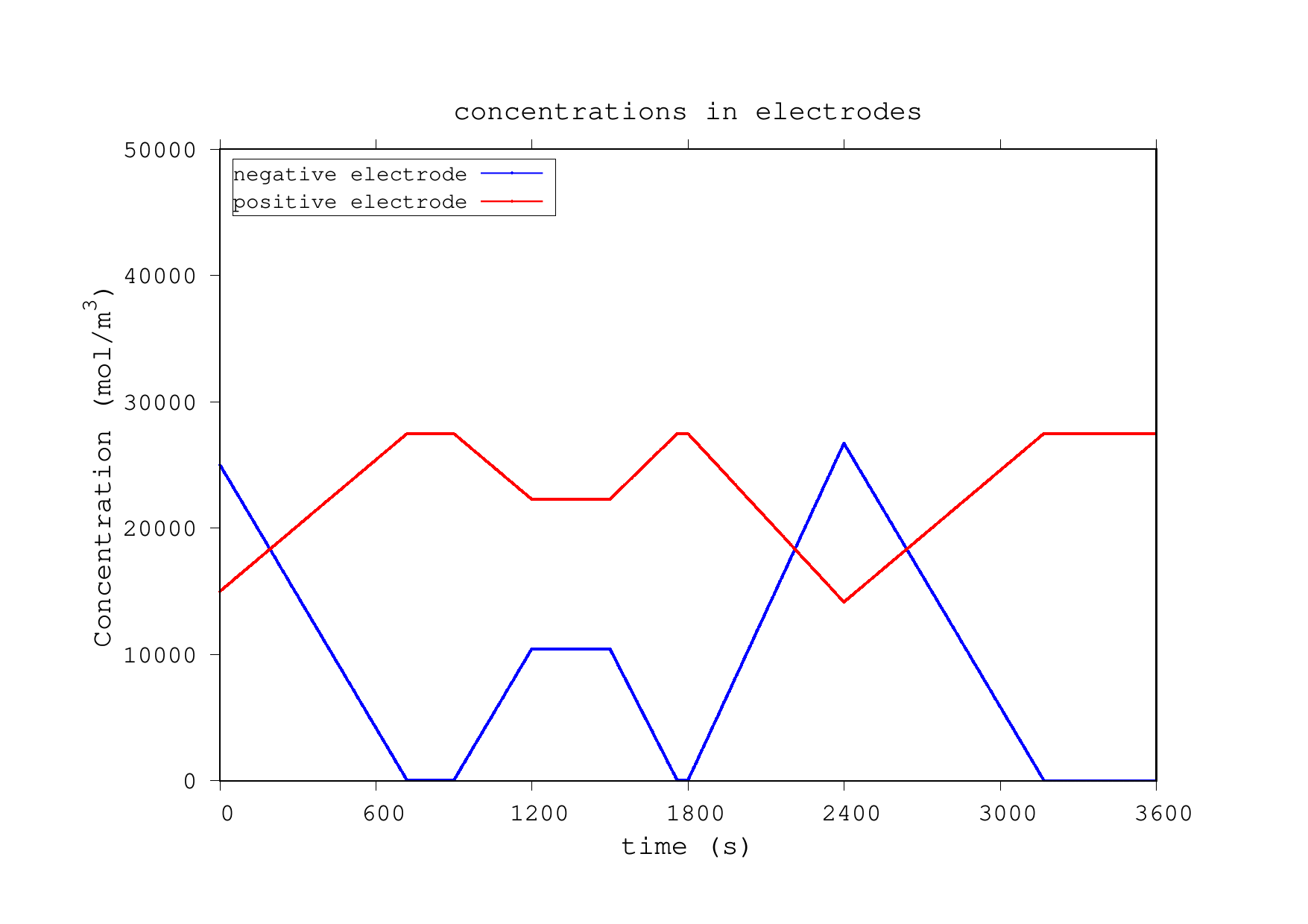}
    \includegraphics[width=0.33\linewidth]{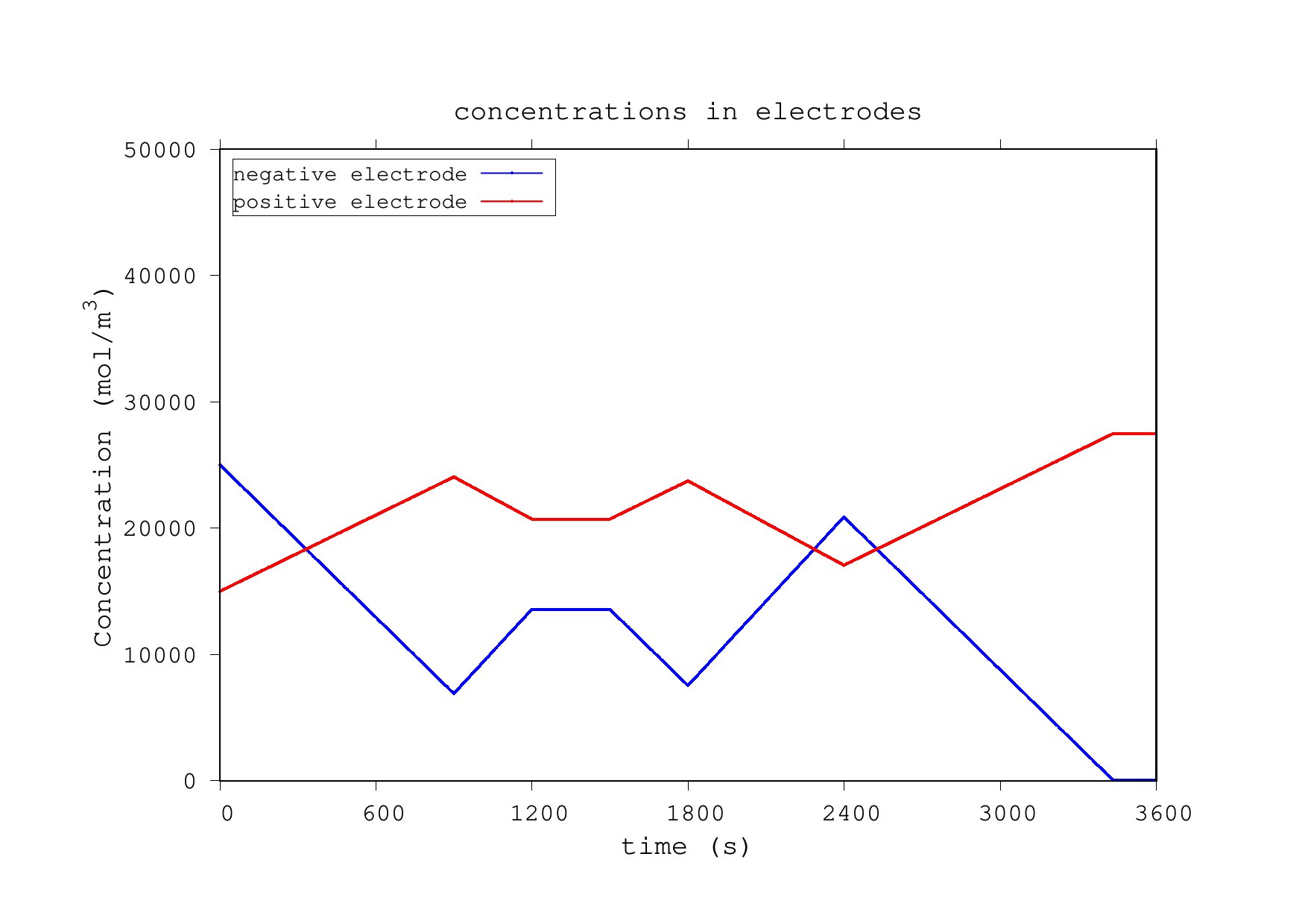}
    \caption{Concentration in the electrodes. Scenario THREE (left panel), scenario ONE (middle panel) and scenario HALF (right panel)}
    \label{fig:electrode_concentration}
\end{figure}

\begin{figure}[ht]
    \centering
    \includegraphics[width=0.33\linewidth]{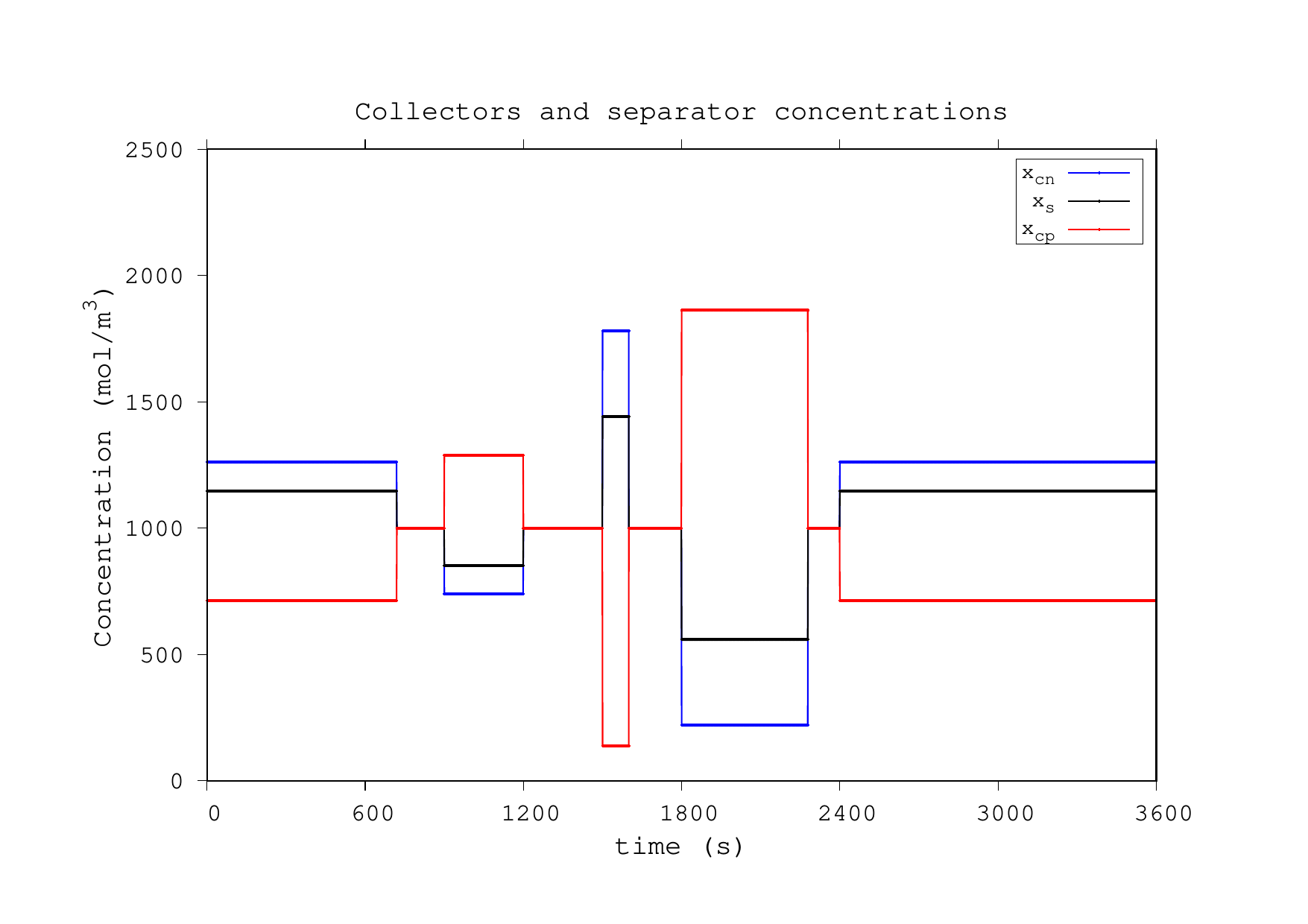}
    \includegraphics[width=0.33\linewidth]{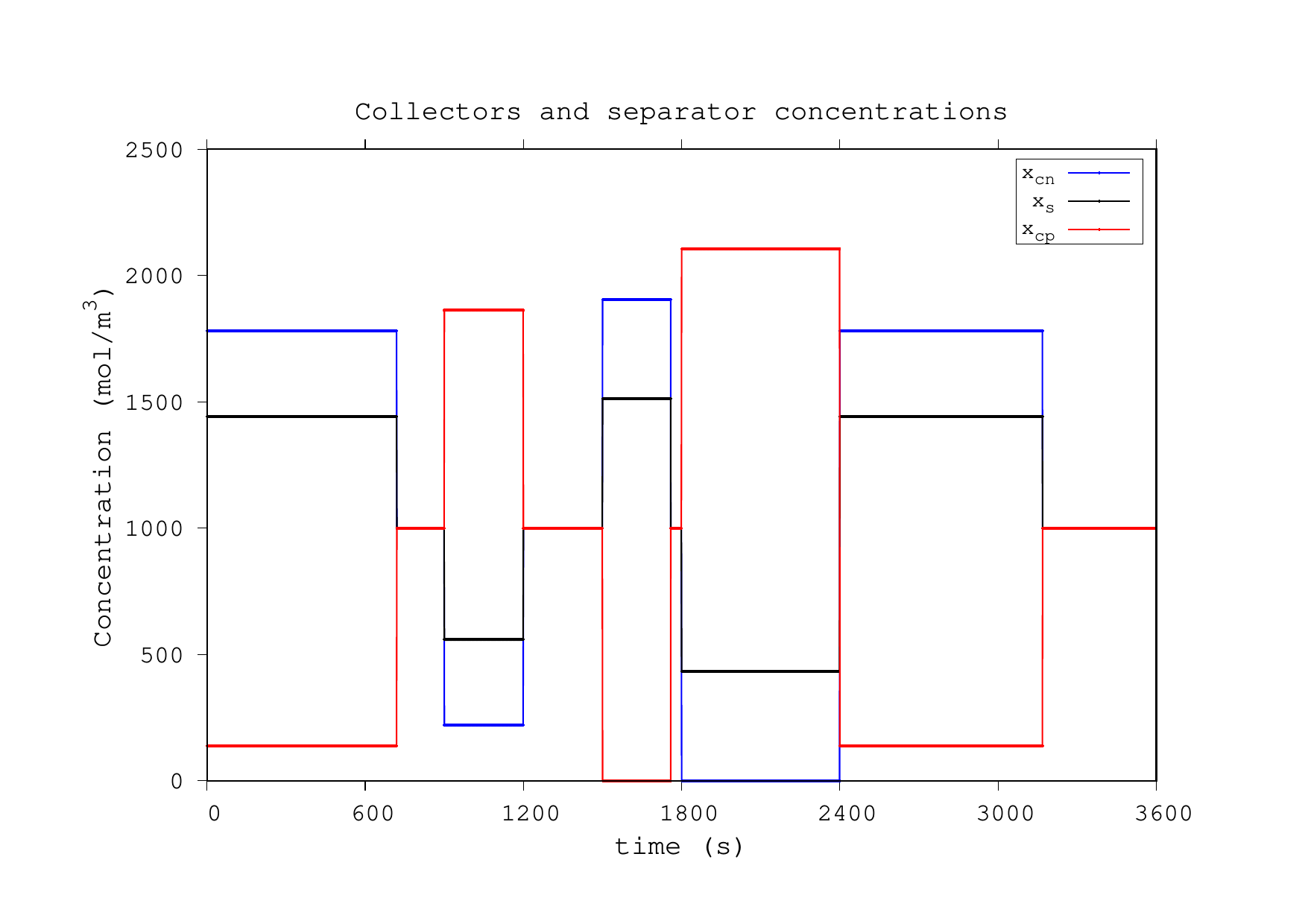}
    \includegraphics[width=0.33\linewidth]{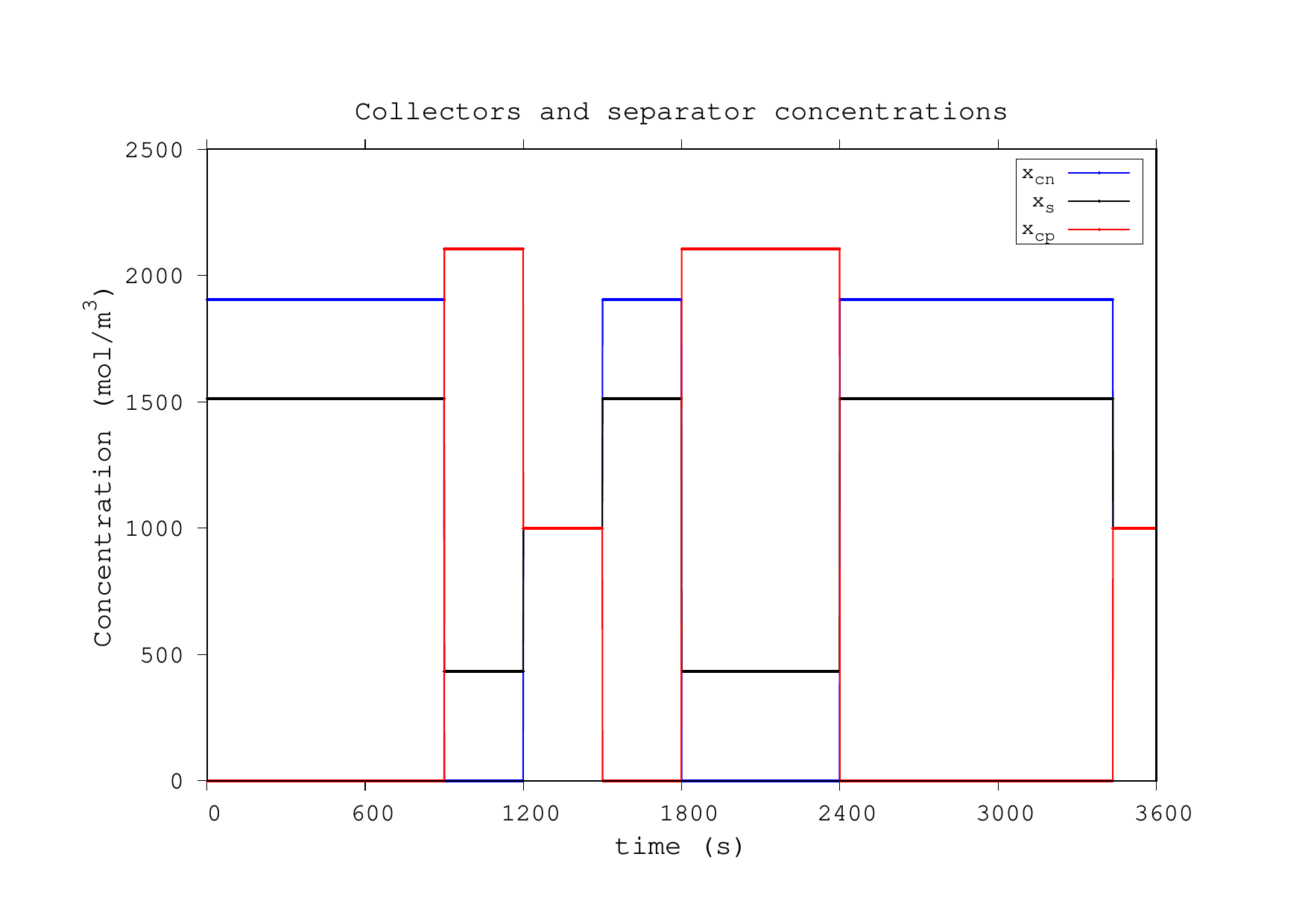}
    \caption{Electrolyte concentration at the collectors. Scenario THREE (left panel), scenario ONE (middle panel), and scenario HALF (right panel)}
    \label{fig:collector_concentration}
\end{figure}

Finally, in Figure \ref{fig:overpotential} we report the overpotentials and $\Delta_c$ for the three scenarios. We notice that the potential $\Delta_c$ has a strong impact on the HALF scenario since very low concentrations occur at the interfaces, which leads to a significant amount of potential regarding the potential difference of the whole cell. In contrast, a high diffusion coefficient makes $\Delta_c$ less than 1$\permil$.
\begin{figure}[ht]
    \centering
    \includegraphics[width=0.33\linewidth]{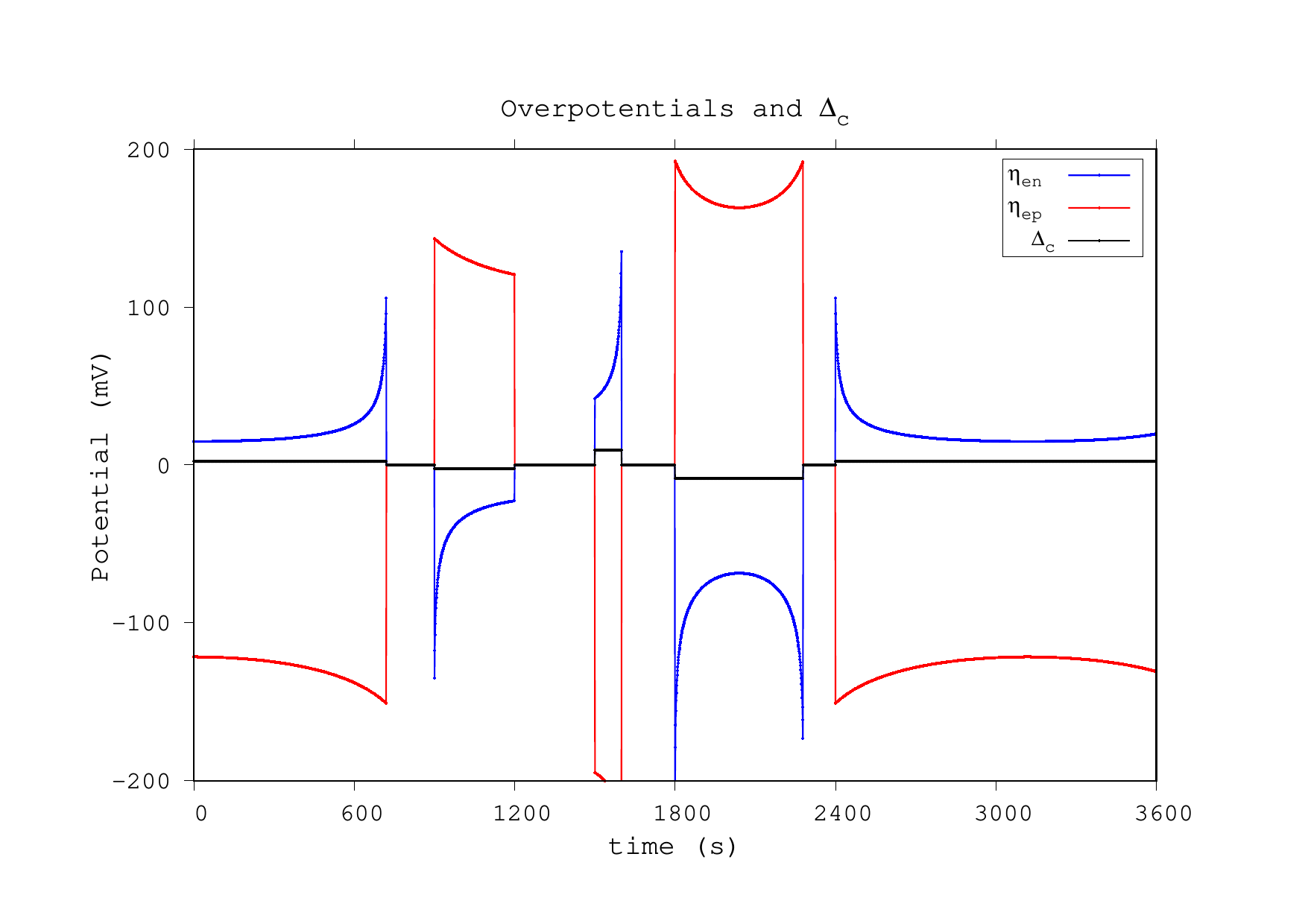}
    \includegraphics[width=0.33\linewidth]{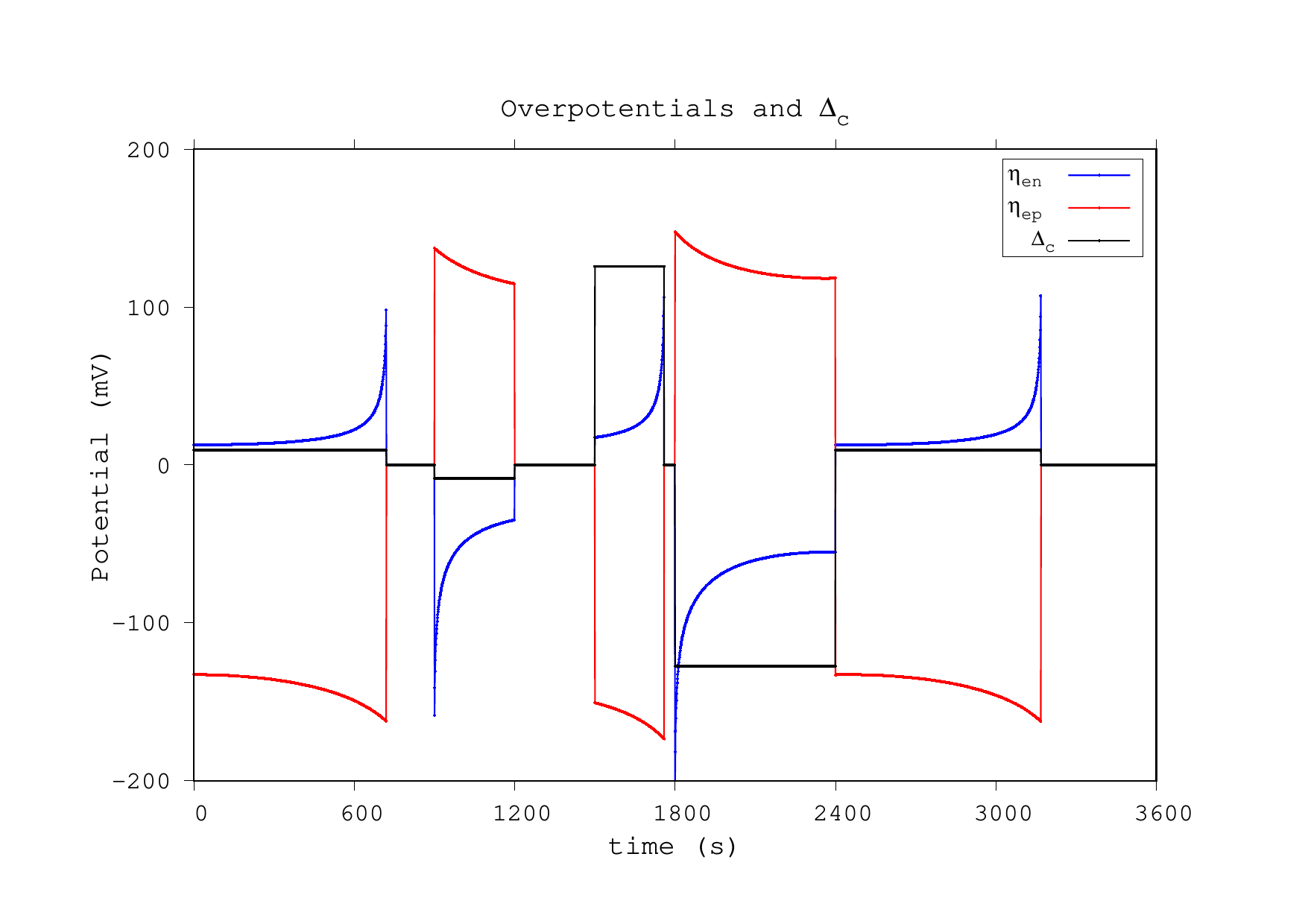}
    \includegraphics[width=0.33\linewidth]{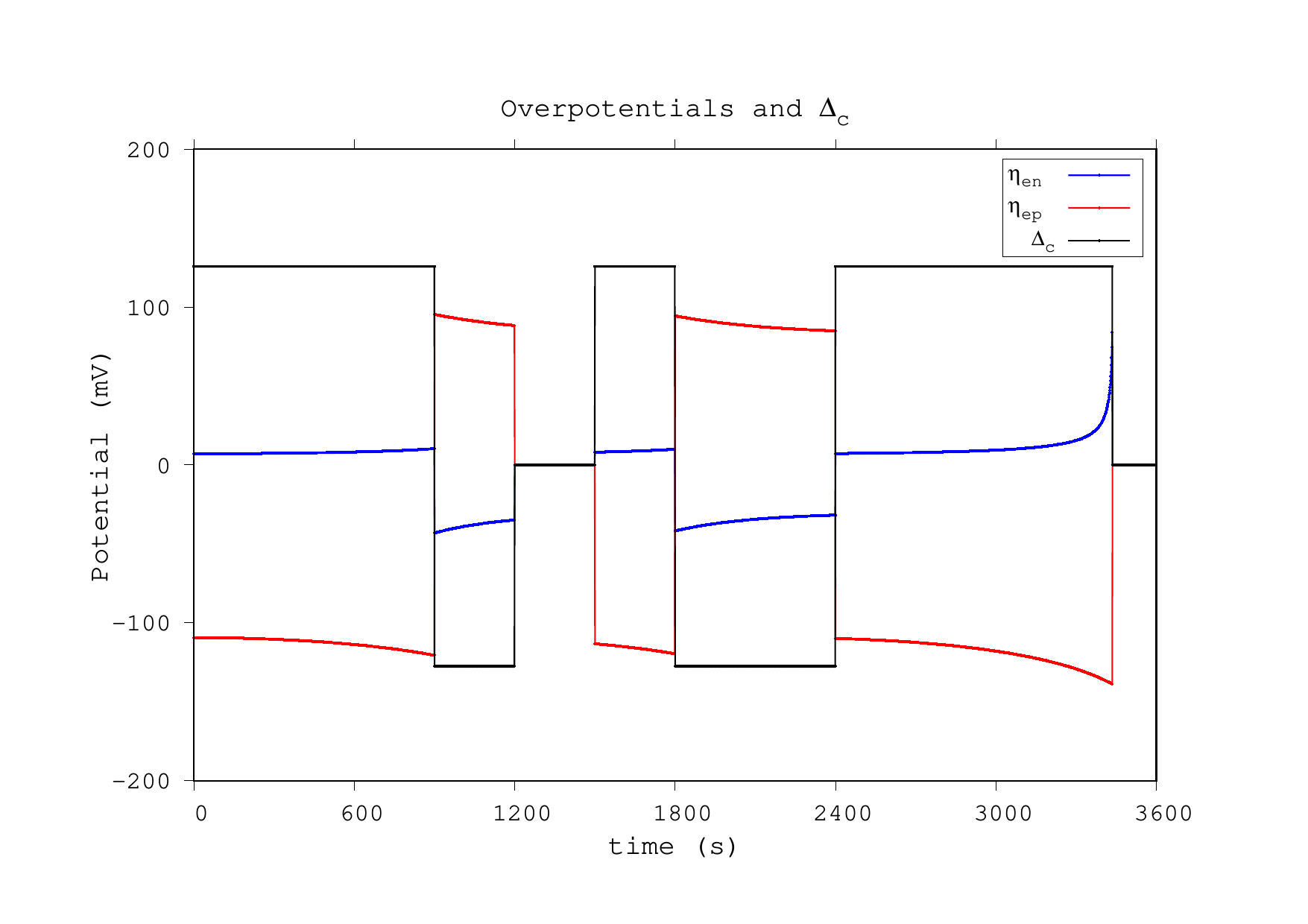}
    \caption{Over-potential and $\Delta_c$. Scenario THREE (left panel), scenario ONE (middle panel), and scenario HALF (right panel)}
    \label{fig:overpotential}
\end{figure}

\section{Conclusions}
We have mathematically developed a time-dependent model based on average variables and their values at the interface. Particular attention was paid to conservation and positivity, leading to the notion of diffusion-driven intensity that represents the major constraint on the range where the intensity is eligible. We presented a numerical method for the simulation and a representative example where the choice of the diffusion strongly modifies the capacity of the cell.

\section*{Funding}
S. Clain  was financially supported by the Fundação para a Ciência e a Tecnologia (Portuguese Foundation for Science and Technology) under the scope of the projects UID/00324/2025 \\ {\tt https://doi.org/10.54499/UID/00324/2025} (Centre for Mathematics of the University of Coimbra).\\[0.4em]

M. T. Malheiro was financially supported by the Fundação para a Ciência e a Tecnologia (Portuguese Foundation for Science and Technology) under the scope of the project UID/00013/2025 (https://doi.org/10.54499/UID/00013/2025). \\[0.4em]
C. M. Costa  was financially supported by the Fundação para a Ciência e a Tecnologia (Portuguese Foundation for Science and Technology) under the scope of the project 
UID/PRR/4650/2025 and UID/PRR2/04650/2025. \\[0.4em]
S. Clain and  M.T. Malheiro acknowledge the financial support of the Portuguese Foundation for Science and Technology (FCT) via a national funding for projects IC\&DT with the reference 2023.16854.ICDT.\\[0.4em]

\bibliographystyle{unsrt}
\bibliography{references.bib}
\appendix
\section{$b$-hyperbolic functions} \label{sec::b-trigonometry}
The Butler-Volmer relation involves the expression $\chi(\eta)=\exp(\beta f \eta)-\exp((\beta-1) f \eta)$ and  the inverse function is required in the algorithm to provide $\eta$ as a function of $r$. To this end, the present section concerns the development of special functions associated with the Butler-Volmer relation, we call the b-hyperbolic functions.
\begin{defn}
Let $\beta\in [0,1]$, we define the b-hyperbolic sine and cosine functions with
$$
\bsinh(x,\beta)=\frac{\exp(2\beta x)-\exp(2(\beta-1)x)}{2},\quad
\bcosh(x,\beta)=\frac{\exp(2\beta x)+\exp(2(\beta-1)x)}{2}.
$$
Such function are well-defined, and differentiable on $\mathbb R$ and one has $\chi(\eta)=2\,\bsinh(\eta\,f/2;\beta)$.
\end{defn}
The functions have several properties we list hereafter.
\begin{prop}
We have the algebraic relations
$$
\bsinh(x,\beta)=\exp((2\beta-1)x)\sinh(x),\quad \bcosh(x,\beta)=\exp((2\beta-1)x)\cosh(x),
$$
$$
\bsinh(x,1-\beta)=-\bsinh(-x,\beta),\quad \bcosh(x,1-\beta)=\bcosh(-x,\beta).
$$
Moreover, the derivatives read
\begin{eqnarray*}
\bsinh'(x,\beta)&=&(2\beta-1)\bsinh(x,\beta)+\bcosh(x,\beta)\\
\bcosh'(x,\beta)&=&(2\beta-1)\bcosh(x,\beta)+\bsinh(x,\beta)
\end{eqnarray*}
\end{prop}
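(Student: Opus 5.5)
The plan is to reduce everything to the factorized form. All four identities are direct computations from the definition, and the first pair does most of the work.

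\emph{Step 1 (factorization).} I will split the exponents as $2\beta x=(2\beta-1)x+x$ and $2(\beta-1)x=(2\beta-1)x-x$. Factoring out $\exp((2\beta-1)x)$ in the definition then gives
$$
\bsinh(x,\beta)=\exp((2\beta-1)x)\,\frac{e^{x}-e^{-x}}{2}=\exp((2\beta-1)x)\sinh(x).
$$
The same splitting with a plus sign gives $\bcosh(x,\beta)=\exp((2\beta-1)x)\cosh(x)$. As a consistency check, $\beta=1/2$ recovers $\sinh$ and $\cosh$, in agreement with the remark in section \ref{sec::BVsusbsec}.

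\emph{Step 2 (reflection $\beta\mapsto 1-\beta$).} Here I will work directly from the definition rather than the factorized form, since only the exponents need matching. On the left,
$$
\bsinh(x,1-\beta)=\tfrac12\big(e^{2(1-\beta)x}-e^{-2\beta x}\big).
$$
On the right,
$$
-\bsinh(-x,\beta)=-\tfrac12\big(e^{-2\beta x}-e^{-2(\beta-1)x}\big)=\tfrac12\big(e^{2(1-\beta)x}-e^{-2\beta x}\big),
$$
so the two sides agree. The $\bcosh$ identity follows identically, with the sign change absorbed because the two exponentials are added. Alternatively, one can use the factorized form: $2(1-\beta)-1=-(2\beta-1)$, together with the oddness of $\sinh$ and evenness of $\cosh$, gives the same result.

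\emph{Step 3 (derivatives).} I will differentiate the factorized expressions from Step 1 with the product rule:
$$
\bsinh'(x,\beta)=(2\beta-1)e^{(2\beta-1)x}\sinh x+e^{(2\beta-1)x}\cosh x=(2\beta-1)\bsinh(x,\beta)+\bcosh(x,\beta).
$$
The $\bcosh$ derivative is symmetric, using $\cosh'=\sinh$.

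There is no genuine obstacle. The only point needing care is the sign bookkeeping in Step 2, where $\beta-1$ and $-x$ combine, and that is why I verify it directly from the definition.
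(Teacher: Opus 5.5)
Your proposal is correct: every identity follows by direct computation, and the sign bookkeeping in your reflection step checks out. The paper states this proposition without a separate proof. Your route, factoring out $\exp((2\beta-1)x)$ and then differentiating with the product rule, is the same computation the paper relies on in its proof of the bijectivity theorem that follows, so your argument matches its approach.
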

In practice we need the inverse function of $\bsinh$ hence we have to determine the domain where the reciprocal function is well-defined and differentiable.
\begin{thm}
For any $\beta\in [0,1]$, the application $x\to y=\bsinh(x,\beta)$ is a bijection from $\mathbb R$ onto $\mathbb R$ and we denote by $y\to x=\arg \bsinh(y,\beta)$ the inverse function.
\end{thm}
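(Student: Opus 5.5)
The plan is to show that $x\mapsto \bsinh(x,\beta)$ is continuous, strictly increasing, and unbounded in both directions. The intermediate value theorem then gives a bijection $\mathbb R\to\mathbb R$. Continuity and differentiability are immediate from the definition as a combination of exponentials.

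\emph{Step 1: monotonicity.} Differentiating the definition directly gives
$$
\frac{d}{dx}\bsinh(x,\beta)=\beta\exp(2\beta x)+(1-\beta)\exp(2(\beta-1)x).
$$
This is the sum of two nonnegative terms, since $\beta\in[0,1]$. Moreover $\beta$ and $1-\beta$ cannot both vanish, so at least one coefficient is positive. The corresponding exponential is strictly positive, so the derivative is strictly positive on $\mathbb R$. Hence $\bsinh(\cdot,\beta)$ is strictly increasing and therefore injective.

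One can also obtain this from the derivative formula in the previous proposition, which reads $(2\beta-1)\bsinh+\bcosh$. Here it is cleaner to expand directly, because the expanded form makes the positivity evident without further estimates.

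\emph{Step 2: limits.} I expect this to be the only point needing care, because of the endpoint cases $\beta=0$ and $\beta=1$.

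\begin{itemize}
\item For $\beta\in(0,1)$: as $x\to+\infty$, the term $\exp(2\beta x)\to+\infty$ while $\exp(2(\beta-1)x)\to 0$, so $\bsinh\to+\infty$. As $x\to-\infty$, the roles swap, since $2(\beta-1)x\to+\infty$, and $\bsinh\to-\infty$.
\item For $\beta=1$: $\bsinh(x,1)=(\exp(2x)-1)/2$, whose range is only $(-1/2,+\infty)$.
\item For $\beta=0$: similarly, the range is $(-\infty,1/2)$.
\end{itemize}

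Thus surjectivity onto $\mathbb R$ actually fails at the endpoints. I would state the theorem for $\beta\in(0,1)$. For $\beta\in\{0,1\}$ I would note that the map is a bijection onto the open half-line just identified, and that its inverse is an explicit logarithm there.

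Alternatively, the symmetry $\bsinh(x,1-\beta)=-\bsinh(-x,\beta)$ reduces the analysis to $\beta\in[1/2,1]$.

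\emph{Step 3: conclusion.} Continuity together with the two infinite limits gives surjectivity by the intermediate value theorem, and Step 1 gives injectivity. The derivative is strictly positive, so the inverse function theorem also gives that $\arg\bsinh(\cdot,\beta)$ is differentiable on $\mathbb R$. Its derivative is $1/\bigl(\beta e^{2\beta x}+(1-\beta)e^{2(\beta-1)x}\bigr)$, evaluated at $x=\arg\bsinh(y,\beta)$.
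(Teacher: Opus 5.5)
Your Step 1 is the paper's argument. The paper starts from $(2\beta-1)\bsinh+\bcosh$ and factors the derivative as $e^{(2\beta-1)x}\bigl[\beta e^{x}+(1-\beta)e^{-x}\bigr]$. This equals your $\beta e^{2\beta x}+(1-\beta)e^{2(\beta-1)x}$, and the paper reads off positivity from the convex combination, as you do.

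The real difference is Step 2, which the paper skips entirely. It simply asserts that the strictly increasing map goes ``from $\mathbb R$ onto $\mathbb R$'', and monotonicity alone does not give that. Your limit analysis supplies the missing surjectivity argument.

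Your endpoint computation is also correct, so the statement as written is false for $\beta\in\{0,1\}$. Indeed $\bsinh(x,1)=(e^{2x}-1)/2$ has range $(-1/2,+\infty)$, and $\bsinh(x,0)=(1-e^{-2x})/2$ has range $(-\infty,1/2)$. The paper's own Newton reformulation confirms this. For $\alpha=2\beta-1=1$ the equation $G(X;\alpha)=0$ reduces to $X^2=1+2y$, which has a positive root only for $y>-1/2$. For $\alpha=-1$ it reduces to $X^{-2}=1-2y$, which needs $y<1/2$.

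Your restriction to $\beta\in(0,1)$, with bijections onto the half-lines at the endpoints, is the correct theorem. It is harmless for the simulations, where $\alpha_n=\alpha_p=1/2$. Note, however, that the later definition of $g(I,R_{ext})$ evaluates $\absinh(I/\iota_k)$ for every real $I$, so it tacitly requires the transfer coefficients to lie in $(0,1)$ as well.
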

{\sc Proof.} The derivative reads
\begin{eqnarray*}
\bsinh'(x,\beta)&=&(2\beta-1)\bsinh(x,\beta)+\bcosh(x,\beta)\\
                &=&\exp((2\beta-1) x)\Big [(2\beta-1)\sinh(x)+\cosh(x)\Big ]\\
                &=&\exp((2\beta-1) x)\Big [\beta \exp(x)+(1-\beta)\exp(-x)]
\end{eqnarray*}
Hence we have a convex combination of two positive functions and conclude that $\bsinh'(x,\beta)>0$ for all $x\in \mathbb R$.
Since $\bsinh(x,\beta)$ is a strictly increasing function from $\mathbb R$ onto $\mathbb R$, it admits an inverse function $y\to x=\absinh(y,\beta)$ from $\mathbb R$ onto $\mathbb R$, which is differentiable with strictly positive derivative.$\square$

\vskip 1em
{\bf Numerical computation of the inverse function}\\
Let $y\in\mathbb R$, we seek $x\in\mathbb R$ such that $y=\bsinh(x,\beta)$. Introducing $X=\exp(x)$, the equation reads
$$
2yX=X^{2\beta}\Big [X-\frac{1}{X} \Big ]\iff 2y=X^{\alpha+1}-X^{\alpha-1}
$$
with $\alpha=2\beta-1\in [-1,1]$. After algebraic manipulations, we seek $X(y)>0$ such that
$$
G(X(y);\alpha)=X^{\alpha+1}-2y-X^{\alpha-1}=0.
$$
Note that we recover the classical $\arg \sinh$ if $\beta=1/2$ $(\alpha=0$) since the expression turns to a quadratic polynomial. For $\alpha\neq 0$, the solution has to be determined numerically, and we use a simple Newton method. Let $X_p$ a sequence be given by $X_{p+1}=X_p-G(X_p;\alpha)/\partial_X G(X_p;\alpha)$ and read
\begin{eqnarray*}
X_{p+1}&=&X_p-\frac{X_p^{\alpha+1}-2y-X_p^{\alpha-1}}{(\alpha+1) X^{\alpha}-(\alpha-1)X^{\alpha-2}}.\\
&=&X_p\left [ 
1-\frac{X_p^{\alpha+1}-2y-X_p^{\alpha-1}}{(\alpha+1) X^{\alpha+1}-(\alpha-1)X^{\alpha-1}}
\right ].
\end{eqnarray*}
The approximations converge towards the unique solution $X(y)$ in a few iterations up to a prescribed tolerance while we take $X_0=1$ for the initialization.

\end{document}